\newcommand{\Z}{{\mathbb{Z}}}
\newcommand{\R}{{\mathbb{R}}}
\newcommand{\fCl}{{\mathfrak{C}_{\operatorname{left}}}}
\newcommand{\fCla}{{\mathfrak{C}^\circ_{\operatorname{left}}}}
\newcommand{\fS}{{\mathfrak{S}}}
\newcommand{\ba}{{\mathbf{a}}}
\newcommand{\bb}{{\mathbf{b}}}
\newcommand{\cC}{{\mathcal{C}}}
\newcommand{\cD}{{\mathcal{D}}}
\newcommand{\cF}{{\mathcal{F}}}
\newcommand{\cH}{{\mathcal{H}}}
\newcommand{\cI}{{\mathcal{I}}}
\newcommand{\cO}{{\mathcal{O}}}
\newcommand{\cR}{{\mathcal{R}}}
\newcommand{\cS}{{\mathcal{S}}}
\newcommand{\Hom}{{\operatorname{Hom}}}
\newcommand{\Irr}{{\operatorname{Irr}}}
\newcommand{\prI}{{\operatorname{pr}}}
\renewcommand{\leq}{\leqslant}
\renewcommand{\geq}{\geqslant}
\newtheorem{thm}{Theorem}[section]
\newtheorem{lem}[thm]{Lemma}
\newtheorem{conj}[thm]{Conjecture}
\newtheorem{cor}[thm]{Corollary}
\newtheorem{prop}[thm]{Proposition}
\theoremstyle{definition}
\newtheorem{exmp}[thm]{Example}
\newtheorem{defn}[thm]{Definition}
\theoremstyle{remark}
\newtheorem{rem}[thm]{Remark}
\begin{document}

\title{On the Kazhdan--Lusztig cells in type $E_8$}

\date{\today}

\author{Meinolf Geck and Abbie Halls}
\address{M.G.: Fachbereich Mathematik, IAZ - Lehrstuhl f\"ur Algebra, 
Universit\"at Stuttgart, Pfaf\-fen\-wald\-ring 57, 70569 Stuttgart, Germany}
\email{meinolf.geck@mathematik.uni-stuttgart.de}
\address{A.H.: 21 Rubislaw Terrace Lane, Aberdeen AB10 1XF, UK}
\email{halls.abbie@gmail.com}

\subjclass[2000]{Primary 20C40, Secondary 20C08, 20F55}

\begin{abstract}
In 1979, Kazhdan and Lusztig introduced the notion of ''cells'' (left,
right and two-sided) for a Coxeter group $W$, a concept with numerous 
applications in Lie theory and around. Here, we address algorithmic 
aspects of this theory for finite $W$ which are important in applications,
e.g., run explicitly through all left cells, determine the values of 
Lusztig's $\ba$-function, identify the characters of left cell 
representations. The aim is to show how type $E_8$ (the largest group of 
exceptional type) can be handled systematically and efficiently, too. This 
allows us, for the first time, to solve some open questions in this case, 
including Kottwitz' conjecture on left cells and involutions. Further 
experiments suggest a characterisation of left cells, valid for any 
finite $W$, in terms of Lusztig's $\ba$-function and a slight 
modification of Vogan's generalized $\tau$-invariant.
\end{abstract}

\maketitle

\pagestyle{myheadings}
\markboth{Geck and Halls}{Kazhdan--Lusztig cells in type $E_8$}

\section{Introduction} \label{sec:intro}

Let $W$ be a Coxeter group. Kazhdan and Lusztig \cite{KL} introduced 
certain polynomials $P_{y,w}\in\Z[v]$ (where $y,w\in W$ and $v$ is an 
indeterminate), which have many remarkable properties and appear in a 
number of problems in the representation theory of Lie algebras and 
algebraic groups. As far as $W$ itself is concerned, the polynomials
$P_{y,w}$ give rise to partitions of $W$ into left, right and two-sided 
''cells''. These play an important role, for example, in the classification 
of the irreducible characters of reductive groups over finite fields 
\cite{LuBook}. Since the appearance of \cite{KL}, various authors have 
contributed to the programme of determining the Kazhdan--Lusztig cells for 
certain types of $W$; see, for example, the comments in \cite[\S 4]{Fokko0},
\cite[7.12 and 7.15]{Hump} and \cite[\S 1.7]{Shi}. The situation for 
finite Coxeter groups is as follows. 

For type $A_n$ (when $W$ is a symmetric group), the cells are determined 
in terms of the Knuth--Robinson--Schensted correspondence (see \cite{KL}, 
\cite{Lu1} and also \cite{mymur}). For the classical types $B_n$ and $D_n$, 
see the work of Garfinkle \cite{gar3} and re-interpretations of this work by
Bonnaf\'e et al. \cite{bgil}. For type $I_2(m)$ (when $W$ is a dihedral 
group), the cells are easily determined by explicit computation (by hand); 
see \cite[7.15]{Hump}. For the groups of exceptional type, the determination 
of the cells heavily relies on computers. See Alvis \cite{Al} (types $H_3$, 
$H_4$), Takahashi \cite{taka} ($F_4$), Tong \cite{tong} ($E_6$),  
Chen--Shi \cite{chenshi} ($E_7$) and Chen \cite{chene8} ($E_8$). 

The above results on the various types of groups raise the question if it 
is possible to characterise and to work with the cells in 
terms of some general principles. For applications and experiments, we 
would also like to be able to reconstruct explicitly the cell partition of 
$W$ in a systematic and efficient way. Furthermore, we typically need to 
know more than just the partition of the set $W$ into cells. One of the most 
important aspects of the theory is that every left cell gives rise to a 
representation of $W$; so, for example, given an element $w\in W$, we would 
like to be able to determine the left cell $\Gamma$ containing $w$ and to 
identify the representation afforded by $\Gamma$ (its dimension, its 
character). 

The methods developed in \cite{pycox} allow us to deal with 
questions of this kind for any finite $W$ of rank up to around $8$, with the 
exception of type $E_8$. The group of type $E_8$ has $696729600$ elements; 
it is---by far---the computationally most challenging case among the finite 
Coxeter groups of exceptional type. (Note also that the results in 
\cite{VoE8} are concerned with a modification of the polynomials $P_{y,w}$ 
due to Lusztig--Vogan, so they do not help us here; the highly efficient 
methods of DuCloux \cite{Fokko0}, \cite{Fokko1} are not sufficient either 
for the kind of questions that we are addressing here.) 

The main purpose of this paper is to show how type $E_8$ can be dealt with
efficiently, too. The new algorithms are designed to work with any finite 
$W$ as input (not just type $E_8$ particularly); they are freely available 
in the latest version of the computer algebra package {\sf PyCox} 
\cite{pyc14}. The original motivation for this work was a conjecture due to 
Kottwitz \cite{kottwitz}, concerning the characters of left cell 
representations and intersections of left cells with conjugacy classes of 
involutions.  By work of Kottwitz himself, Casselman \cite{cass}, Bonnaf\'e 
and the first-named author \cite{boge}, \cite{pycox}, \cite{tkott}, this 
conjecture was known to hold except possibly for type $E_8$. The algorithms 
developed in this paper allow us to verify Kottwitz's conjecture for type 
$E_8$ in a straightforward way (by an almost automatic procedure). Hence, 
this conjecture is now known to hold for any finite Coxeter group. 

Systematic experiments based on the methods presented in this paper suggest 
a general characterisation of left cells in terms of Lusztig's 
$\ba$-function \cite[Chap.~5]{LuBook} and a slight variation of 
Vogan's generalised $\tau$-invariant \cite[\S 3]{voga}; see 
Conjecture~\ref{sigmatau}.

This paper is organised as follows. In Section~\ref{sec1}, we briefly
recall the basic definitions concerning the polynomials $P_{y,w}$ and
the cells of $W$. In Section~\ref{secspec}, we present results of Lusztig 
\cite[Chap.~5]{LuBook} which show that the two-sided cells of $W$ are in 
bijective correspondence with the so-called ''special representations'' of 
$W$. By \cite[5.27]{LuBook}, this correspondence gives rise to the 
definition of a numerical function $w\mapsto \ba(w)$ on $W$, which is 
constant on the two-sided cells. In Section~\ref{seclead}, we discuss the 
problem of explicitly computing $\ba(w)$ for any given $w\in W$. The main 
idea is to use Lusztig's ''leading coefficients of character values'' 
\cite{LuBook}, \cite{Lu4} and their refinements for matrix representations 
introduced in \cite{my02}. In Section~\ref{sec2}, we recall some basic 
results about the Kazhdan--Lusztig star operations \cite{KL}; these lead 
to the definition of Vogan's \cite{voga} ''generalized $\tau$-invariant''
for elements of $W$. We then show how to determine a (relatively small) 
set $\fCla(W)$ of left cells of $W$ such that {\em any} left cell of $W$ 
can be reached from a unique cell in $\fCla(W)$ by a straightforward 
procedure (repeated applications of star operations); see 
Remark~\ref{defrel1}. In Section~\ref{sece8}, we apply these general 
methods to type $E_8$. Here, there are $101796$ left cells in total but 
the set $\fCla(W)$ contains only $106$ left cells. Using the knowledge 
of $\fCla(W)$, we obtain the desired algorithms for efficiently dealing 
with all the left cells of $W$. Applications, including Kottwitz' 
conjecture, are discussed in the final Section~\ref{secappl}.

%

A key idea in this work is to use a relatively small subset $\breve{\cD}
\subseteq W$ with the following properties: (1) It is defined in general 
terms and is known to contain---by a theoretical argument, see 
Proposition~\ref{pyc56}---representatives of all left cells of $W$, (2)  
there is a general algorithm for the determination of $\breve{\cD}$ (see 
\cite[\S 5]{pycox}) and (3) this algorithm also produces additional
information like values of the $\ba$-function and the characters of 
cell representations. It can be shown that $\breve{\cD}$ is in fact the set 
of ''distinguished involutions'' as defined by Lusztig \cite{Lu2}, but we do 
not need this result here. In our approach, knowing $\breve{\cD}$ constitutes 
the first step in describing the left cells of $W$. In type $E_8$, for
example, it quickly leads to a new and independent proof of the main result 
of Chen \cite{chene8}; see Example~\ref{tauE8}. This is the basis for
the experiments leading to the formulation of Conjecture~\ref{sigmatau}.

\section{Kazhdan--Lusztig polynomials and cells} \label{sec1}

Let $W$ be a Coxeter group with generating set $S$. Let $l\colon W 
\rightarrow \Z_{\geq 0}$ be the usual length function with respect to $S$.
We briefly recall the definition of cells from \cite{KL}. For this purpose,
let $\cH$ be the one-parameter generic Iwahori--Hecke algebra. This is 
an associative algebra over the ring $A=\Z[v,v^{-1}]$ of Laurent polynomials 
in one variable~$v$. The algebra $\cH$ is free as an $A$-module with basis 
$\{T_w\mid w\in W\}$. The multiplication is determined by:
\[ T_sT_w=\left\{\begin{array}{cl} T_{sw} & \quad \mbox{if $l(sw)>l(w)$},\\
T_{sw}+(v-v^{-1})T_w & \quad \mbox{if $l(sw)<l(w)$},\end{array}\right.\]
where $s\in S$ and $w\in W$. For basic properties of $W$ and $\cH$, we
refer to \cite{gepf}, \cite{Lusztig03}.  Let $\{C_w'\mid w\in W\}$ be the
new basis of $\cH$ introduced in \cite[Theorem~1.1]{KL}. The change of basis
to the old basis is given by equations
\[ C_w'=\sum_{y\in W} v^{l(y)-l(w)}P_{y,w} T_y\qquad \mbox{where}\qquad
P_{y,w}\in \Z[v].\]
Here are some of the properties of the polynomials $P_{y,w}$ (see also 
Remark~\ref{rem1} below). We have $P_{w,w}=1$ and $P_{y,w}=0$ unless $y\leq w$, 
where $\leq$ is the Bruhat--Chevalley order. If $y<w$, then $P_{y,w}\in\Z[v]$ 
has constant term $1$ and degree at most $l(w)-l(y)-1$; furthermore, only
even powers of $v$ will occur in $P_{y,w}$. If $y<w$, we will denote by 
$\mu(y,w)$ the coefficient of $v^{l(w)-l(y)-1}$ in $P_{y,w}$. (Thus, 
$\mu(y,w)=0$ if $l(y) \equiv l(w)\bmod 2$.) We write $y \leftarrow_{L} w$
if one of the following conditions holds:
\begin{itemize}
\item $y=w$ or 
\item $y<w$ and $\mu(y,w)\neq 0$ or
\item there exists some $s\in S$ such that $y=sw>w$.
\end{itemize}
The Kazhdan--Lusztig left pre-order $\leq_{L}$ is the transitive closure
of the relation $\leftarrow_{L}$, that is, we have $x\leq_{L} y$ if there 
exists a sequence $x=x_0,x_1, \ldots,x_k=y$ of elements in $W$ such that 
$x_{i-1} \leftarrow_{L} x_i$ for all~$i$. The equivalence relation associated
with $\leq_{L}$ will be denoted by $\sim_{L}$ and the corresponding
equivalence classes are called the {\em left cells} of $W$.

We write $x \leq_{R} y$ if $x^{-1} \leq_{L} y^{-1}$. The
equivalence relation associated with $\leq_{R}$ will be denoted by
$\sim_{R}$ and the corresponding equivalence classes are called the
{\em right cells} of $W$. Finally, we define a pre-order $\leq_{LR}$
by the condition that $x\leq_{LR} y$ if there exists a sequence $x=x_0,
x_1,\ldots, x_k=y$ such that, for each $i \in \{1,\ldots,k\}$, we have 
$x_{i-1} \leq_{L} x_i$ or $x_{i-1}\leq_{R} x_i$. The equivalence 
relation associated with $\leq_{LR}$ will be denoted by $\sim_{LR}$ 
and the corresponding equivalence classes are called the {\em two-sided 
cells} of $W$.

\begin{rem} \label{rem0b} Among other reasons, the partition of $W$ into 
left cells is important because it gives rise to representations of $W$ 
and the corresponding algebra $\cH$ in terms of so-called ''$W$-graphs''
(see \cite[Theorem~1.3]{KL}). Indeed, let $\Gamma$ be a left cell and 
$[\Gamma]_A$ be an $A$-module which is free over $A$ with basis $\{e_x
\mid x\in \Gamma\}$. Then $[\Gamma]_A$ is an $\cH$-module, where the 
action of $T_s$ ($s\in S$) is given as follows.
\[T_s.e_y=\left\{\begin{array}{cl} -v^{-1} e_y & \quad \mbox{if $sy<y$},\\
ve_y+\displaystyle \sum_{x\in \Gamma\,:\, sx<x} \tilde{\mu}(x,y) e_x & \quad 
\mbox{if $sy>y$}.\end{array}\right.\]
Here, we set $\tilde{\mu}(x,y)=\mu(x,y)$ if $x<y$ and $\tilde{\mu}(x,y)=
\mu(y,x)$ if $y<x$; otherwise, we set $\tilde{\mu}(x,y)=0$. (See also 
\cite[Def.~1.2]{KL}.)
\end{rem}


\begin{rem} \label{rem1} By the definitions, the cells of $W$ are determined 
via the knowledge of the coefficients $\mu(y,w)$ of the polynomials
$P_{y,w}$. Now, there is a recursive and purely combinatorial algorithm for 
the computation of $P_{y,w}$. Assume that $w \neq 1$ and let $s \in S$ be 
such that $sw <w$. Then we have (see \cite[(2.2.c) and (2.3.g)]{KL}):
\begin{itemize}
\item If $sy>y$, then $P_{y,w}=P_{sy,w}$.
\item If $sy<y$, then 
$\displaystyle P_{y,w}=P_{sy,sw}+v^2P_{y,sw}- \sum_{y \leq z <sw,\,sz<z} 
\mu(z,sw)v^{l(w)-l(z)}P_{y,z}$.
\end{itemize}
For large $W$, these formulae quickly become unusable. Using the concept 
of ''induction of cells''\cite{myind}, one can improve these recursion 
formulae, as explained in \cite[\S 4]{pycox}. 
In this way, we obtain an algorithm for the computation of left cells which 
works quite efficiently for all finite $W$ of rank $\leq 7$; see 
\cite[Table~2, p.~246]{pycox}. For example, let $W$ be of type $E_6$. The 
left cells in this case have been determined by Tong \cite{tong}. Using 
the computer algebra package {\sf PyCox} \cite{pyc14}, we obtain the cells
by an automatic procedure as follows. 
\begin{verbatim}
    >>> W=coxeter("E", 6); W.order
    51840
    >>> kl=klcells(W, 1, v)
    #I 652 left cells (21 non-equivalent), mues:1
\end{verbatim}
(This takes about $30$ seconds; of course, this varies with the available 
computer. See the help menu of {\tt klcells} for further information 
about the format of this command.) This also works for type $E_7$; the 
computation of the $6364$ left cells using {\tt klcells} takes about 
$4$ hours in this case. (See Chen--Shi \cite{chenshi} where different 
methods were used.)---However, type $E_8$ remains by far out of reach in 
this approach. 
\end{rem} 

\begin{rem} \label{rem0c} Assume that $W$ is finite. Then we denote by
$\Irr(W)$ the set of simple $\R[W]$-modules (up to isomorphism). Note that 
$\R$ is a splitting field for $W$ (see \cite[6.3.8]{gepf}). Let $\Gamma$ be 
a left cell. By specializing $v\rightarrow 1$, the $\cH$-module $[\Gamma]_A$ 
in Remark~\ref{rem0b} becomes an $\R[W]$-module which we denote by 
$[\Gamma]_1$. For any $E\in\Irr(W)$, we denote by $m(\Gamma,E)$ the 
multiplicity of $E$ as a constituent of $[\Gamma]_1$. We shall also need 
to address the problem of computing these multiplicities without having 
to work out the character values of $[\Gamma]_1$; this will be done in
Proposition~\ref{pyc56} below. 
\end{rem}

\section{Two-sided cells and special representations} \label{secspec}

We shall assume from now on that $W$ is finite. By the above definitions, 
the two-sided cells of $W$ are derived from the knowledge of the relation
$\leq_L$. Thus, it would seem that the determination of the two-sided cells 
is at least as difficult as the determination of the left cells. There is, 
however, a different way to approach the two-sided cells, using the 
representation theory of $W$. This is based on the following constructions. 
Since the left cells form a partition of $W$, we have a direct sum
decomposition of left $\R[W]$-modules
\[ \R[W] \cong \bigoplus_{\text{$\Gamma$ left cell of $W$}} [\Gamma]_1.\]
Hence, given $E \in \Irr(W)$, there exists a left cell $\Gamma$ such that 
$E$ is a constituent of $[\Gamma]_1$. 

\begin{defn} \label{def0} Let $E\in \Irr(W)$. Then all left 
cells $\Gamma$ such that $E$ is a constituent of $[\Gamma]_1$ are 
contained in the same two-sided cell. (See \cite[5.1, 5.15]{LuBook} or 
\cite[\S 2.2]{geja}.) This two-sided cell, therefore, only depends on 
$E$ and will be denoted by $\cF_E$. Thus, we obtain a partition
\[\Irr(W)=\bigsqcup_{\cF \text{ two-sided cell}} \Irr(W\mid \cF),\]
where $\Irr(W\mid \cF)$ consists of all $E \in \Irr(W)$ such that
$\cF_E=\cF$. 
\end{defn}

It is remarkable that one can prove some things about the above partition 
of $\Irr(W)$ without first working out the two-sided cells. To state this 
more precisely, we need some further notation. Let $\cH$ be the one-parameter
generic Iwahori--Hecke algebra associated with $W$, as in the previous
section. Let $K=\R(v)$. By extension of scalars, we obtain a $K$-algebra 
$\cH_K=K \otimes_A \cH$. It is known that $\cH_K$ is split semisimple 
and abstractly isomorphic to $K[W]$ (see \cite[9.3.5]{gepf}); furthermore, 
the map $v\mapsto 1$ induces a bijection between $\Irr(\cH_K)$ and 
$\Irr(W)$ (see \cite[8.1.7]{gepf}). Given $E \in \Irr(W)$, we denote by 
$E_v$ the corresponding irreducible representation of $\cH_K$. We have 
$\mbox{trace}(T_w,E_v)\in \R[v,v^{-1}]$ for all $w \in W$ (see 
\cite[9.3.5]{gepf}). We define
\[ \ba_E:=\min\{i\in \Z_{\geq 0}\mid v^i\, \mbox{trace}(T_w,E_v) \in \R[v] 
\mbox{ for all $w \in W$}\}.\]
Finally, let $\bb_E$ be the smallest integer $i\geq 0$ such that $E$ 
occurs as a constituent of the $i$-th symmetric power of the natural 
reflection representation of $W$. Then, as in \cite[4.1]{LuBook}, the set 
of {\em special representations} of $W$ is defined by   
\[\cS(W):=\{E\in \Irr(W) \mid \ba_E=\bb_E\}.\]
Now we can state:

\begin{thm}[Lusztig] \label{canrep} Let $\cF$ be a two-sided cell of $W$.
Then $\cF=\cF_{E_0}$ for a unique $E_0 \in\cS(W)$. Furthermore, the 
function $E\mapsto \ba_E$ is constant on $\Irr(W\mid \cF)$.
\end{thm}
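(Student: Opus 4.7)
The plan is to exploit Lusztig's $\ba$-function on $W$, written $w \mapsto \ba(w)$, which is defined via the $v$-adic valuations of the structure constants of the basis $\{C_w' : w \in W\}$ of $\cH$ (see \cite[Chap.~5]{LuBook}). Two auxiliary inputs, both due to Lusztig, are central: (i) $\ba$ is constant on every two-sided cell of $W$, and (ii) for each $E \in \Irr(W)$, one has $\ba_E = \ba(w)$ whenever $w$ lies in the two-sided cell $\cF_E$.

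Assuming (i) and (ii), the second assertion of the theorem is immediate. For any left cell $\Gamma \subseteq \cF$ and any $E \in \Irr(W \mid \cF)$, the definition of $\cF_E$ forces $\cF_E = \cF$; then (ii) gives $\ba_E = \ba(w)$ for any $w \in \cF$, a value that is well-defined by (i). Hence $\ba_E$ depends only on $\cF$, so $E \mapsto \ba_E$ is constant on $\Irr(W\mid \cF)$.

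For the first assertion, I would first establish the general inequality $\ba_E \leq \bb_E$ for every $E \in \Irr(W)$. This comes from a comparison of the generic degree attached to $E_v$ with the fake degree (the Poincar\'e polynomial counting the multiplicity of $E$ in the graded coinvariant algebra of $W$). Within a fixed family $\Irr(W \mid \cF)$, all members share the common value $\ba_E = \ba(\cF)$, so the special representations contained in $\cF$ are precisely those $E$ with $\bb_E = \ba(\cF)$. Existence and uniqueness of such an $E_0$ in each $\cF$ then has to be verified.

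The main obstacle is this last uniqueness step, which is Lusztig's theorem that each family of $\Irr(W)$ contains a unique special representation. In its general form it is proved by reducing to the irreducible Coxeter types and inspecting explicit descriptions of the families---a non-trivial case-by-case analysis, especially for the classical types (via symbols) and for $E_7, E_8$. The identity (ii) used above is also substantial in its own right: the bound $\ba_E \geq \ba(w)$ for $w \in \cF_E$ follows from expressing $\mathrm{trace}(T_w, E_v)$ via the $W$-graph action on $[\Gamma]_A$ (see Remark~\ref{rem0b}) and comparing $v$-valuations of the coefficients, while the reverse bound uses a generic degree polynomial associated with $\cF$ whose valuation realises $\ba(\cF)$.
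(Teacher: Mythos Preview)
Your sketch is coherent and you are candid about where the depth lies, but your route differs from the paper's and, for the first assertion, does not actually sidestep the paper's key ingredient.

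The paper does not argue via the structure-constant $\ba$-function on $W$. It instead invokes Lusztig's combinatorial partition of $\Irr(W)$ into \emph{families} (defined in \cite[4.2]{LuBook} via truncated induction and tensoring with sign), together with the deep result \cite[Theorem~5.25]{LuBook} that $\cF_E=\cF_{E'}$ if and only if $E,E'$ lie in the same family. Both assertions of Theorem~\ref{canrep} are then read off from properties of families already established in \cite[Chap.~4]{LuBook} (and \cite[\S 6.5]{gepf} for the non-crystallographic types). Note also that in this paper $w\mapsto\ba(w)$ is \emph{defined} via Theorem~\ref{canrep} (Definition~\ref{aval}); you rightly avoid circularity by using the independent structure-constant definition, but that is in \cite{Lu1} or \cite{Lusztig03}, not \cite[Chap.~5]{LuBook}.

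For the constancy of $E\mapsto\ba_E$ your inputs (i) and (ii) do give a genuine alternative: both follow from positivity (now available in general by \cite{EW}) and Lusztig's asymptotic-algebra arguments, without passing through families. For the first assertion, however, you invoke ``Lusztig's theorem that each family contains a unique special representation''. The subtlety is that the sets $\Irr(W\mid\cF)$ are not \emph{a priori} Lusztig's families in the sense of \cite[4.2]{LuBook}; establishing that identification is precisely \cite[Theorem~5.25]{LuBook}, the paper's main deep step. So either you must supply that identification---recovering the paper's argument---or you must verify directly, type by type, that each $\Irr(W\mid\cF)$ contains a unique $E$ with $\bb_E$ equal to the common $\ba$-value, which presupposes an explicit description of the sets $\Irr(W\mid\cF)$ and is not easier. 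In short, your approach offers a genuinely different packaging for the second assertion but not an independent proof of the first.
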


The functions $E\mapsto \ba_E$, $E\mapsto \bb_E$ and, hence, the sets
$\cS(W)$ are explicitly known in all cases; see the tables in 
\cite[Chap.~4]{LuBook} (for finite Weyl groups) and \cite[\S 6.5]{gepf} 
(where the types $I_2(m)$, $H_3$, $H_4$ are included in the discussion).

\begin{table}[htbp] 
\caption{The $46$ special representations for $W$ of type $E_8$} 
\label{fame8} 
\begin{center}
$\begin{array}{cc} \hline E & \ba_E \\\hline
1_x& 0\\
8_z &1\\
35_x& 2 \\
112_z &3\\ 
210_x& 4\\ 
560_z&5\\ 
567_x&6\\ 
700_x&6\\ 
1400_z &7\\ 
1400_x& 8\\ \hline
\end{array}\qquad
\begin{array}{cc} \hline E & \ba_E \\\hline
3240_z&9\\ 
2268_x&10\\
2240_x&10\\ 
4096_z&11\\
525_x&12\\
4200_x&12 \\ 
2800_z&13\\ 
4536_z&13 \\ 
2835_x&14\\ 
6075_x&14 \\\hline
\end{array}\qquad
\begin{array}{cc} \hline E & \ba_E \\\hline
4200_z&15\\ 
5600_z&15 \\ 
4480_y&16\\
2100_y&20\\ 
4200_z'&21\\ 
5600_z'&21\\
2835_x'&22\\
6075_x'&22\\ 
4536_z'&23\\ 
4200_x'&24\\ \hline
\end{array}\qquad
\begin{array}{cc} \hline E & \ba_E \\\hline
2800_z'&25\\ 
4096_x' &26 \\
2240_x'&28\\ 
2268_x'& 30 \\
3240_z'&31\\ 
1400_x'& 32\\ 
525_x'&36\\
1400_z'& 37\\
700_x'&42 \\
567_x'&46\\ \hline
\end{array}\qquad
\begin{array}{cc} \hline E & \ba_E \\\hline
560_z'&47\\
210_x'& 52\\ 
112_z' &63\\
35_x'&74\\ 
8_z'& 91\\ 
1_x' & 120\\\hline \\\\\\\\
\end{array}$
\end{center}
\end{table}

For example, if $W$ is of type $E_8$, we have $|\Irr(W)|=112$ and there 
are $46$ special representations; they are listed in Table~\ref{fame8} 
(which is taken from \cite[4.13.1]{LuBook}). Consequently, by 
Theorem~\ref{canrep}, we already know that there are $46$ two-sided cells 
of $W$. Using \cite[5.25.2, 5.26, 12.3.7]{LuBook}, it also follows that
\[\mbox{Number of left cells of $W$ } =\sum_{E_0\in \cS(W)} \dim E_0 =
101796.\]
We shall not need the latter result, since we will obtain this number 
independently in the course of our computations; see Example~\ref{tauE8}.

\begin{defn}[Lusztig \protect{\cite[5.27]{LuBook}}] \label{aval} Let
$w\in W$. If $\cF$ is the two-sided cell containing $w$, then we set 
$\ba(w):=\ba_E$ where $E\in\Irr(W\mid\cF)$. (By Theorem~\ref{canrep}, 
this is well-defined.) 
\end{defn}

{\em Comments on the proof of Theorem~\ref{canrep}}. Contrary to the 
previous results, and those discussed in Section~\ref{seclead} below,
there does not seem to exist an elementary direct proof of 
Theorem~\ref{canrep}. Using standard operations in the character ring 
of $W$ (induction from parabolic subgroups, tensoring with the sign 
character), Lusztig \cite[4.2]{LuBook} has defined another partition 
of $\Irr(W)$ into so-called ``families''. As shown in 
\cite[Theorem~5.25]{LuBook} (for finite Weyl groups, using deep results 
from the theory of Lie algebras and algebraic groups), we have 
$\cF_E=\cF_{E'}$ if and only if $E,E'$ belong to the same ''family''; a
relatively simple direct argument for the types $I_2(m)$, $H_3$, $H_4$ can be
found in \cite[Example~3.6]{klord}. (Another proof, for general $W$, is given
in \cite[Prop.~23.3]{Lusztig03}, which relies on certain positivity properties 
for $\cH$. There is now a general, purely algebraic proof of these 
positivity properties, due to Elias--Williamson \cite[Cor.~1.2]{EW}.) 
Then the statements in Theorem~\ref{canrep} follow from the analogous 
results for ''families'', and these are contained in 
\cite[Chap.~4]{LuBook} (for finite Weyl groups) and \cite[\S 6.5]{gepf} 
(where the types $I_2(m)$, $H_3$, $H_4$ are included in the discussion).\qed

\begin{rem} \label{rem0} By the definitions, it is clear that every
two-sided cell is at the same time a union of left cells and a union of
right cells. It is, however, not clear at all that two-sided cells are the 
smallest subsets of $W$ with this property. This follows from the following 
implication, where $x,y\in W$:
\begin{equation*} 
x \leq_L y \qquad \mbox{and}\qquad x\sim_{LR} y \qquad \Rightarrow \qquad
x\sim_L y.\tag{A}
\end{equation*}
The fact that (A) holds is implicit in the proof of Theorem~\ref{canrep}.
When $W$ is a finite Weyl group, (A) was first proved by Lusztig; see 
\cite[Lemma~4.1]{Lu0} or \cite[Cor.~5.5]{Lu1} (using similar methods as 
discussed in the proof of Theorem~\ref{canrep}). For type $H_4$, (A) has 
been verified in \cite[Cor.~3.3]{Al}; a similar verification also works 
for $H_3$. For type $I_2(m)$, see \cite[8.7]{Lusztig03}. Once (A) is known 
to hold, it follows that the partition of $W$ into left cells determines 
the partitions into right cells and into two-sided cells. 
\end{rem}


\section{Cells and leading coefficients} \label{seclead}

In the previous section, we have seen that one can attach to any element
$w\in W$ a numerical value $\ba(w)$: we have $\ba(w)=\ba_{E}$ where
$E\in\Irr(W)$ is such that $w\in \cF_E$. The purpose of this section is to
address the following problem:
\begin{center}
{\em Given $w\in W$, how can we actually find some $E\in\Irr(W)$ such that 
$w\in \cF_E$~?}
\end{center}
We will see that this problem can be solved using Lusztig's {\em leading 
coefficients of character values} \cite{LuBook}, \cite{Lu4} and their 
refinements introduced in \cite{my02}. Let $E\in\Irr(W)$ and consider
the corresponding irreducible representation $E_v$ of $\cH_K$. Recall that
\[v^{\ba_E}\,\mbox{trace}(T_w,E_v) \in \R[v]\qquad \mbox{for all $w\in W$}.\]
Consequently, there are unique numbers $c_{w,E} \in \R$ ($w\in W$) such that
\[v^{\ba_E}\,\mbox{trace}(T_w,E_v) =(-1)^{l(w)}\,c_{w,E}+\mbox{``higher 
terms''},\]
where ``higher terms'' means an $\R$-linear combination of monomials
$v^i$ where $i>0$. Given $E$, there is at least one $w \in W$ such that 
$c_{w,E}\neq 0$ (by the definition of $\ba_E$). Since $\mbox{trace}(T_w,
E_v)= \mbox{trace}(T_{w^{-1}},E_v)$ for all $w \in W$ (see 
\cite[8.2.6]{gepf}), we certainly have
\[ c_{w,E}=c_{w^{-1},E} \qquad \mbox{for all $w \in W$}.\]
These numbers are called the {\em leading coefficients of character values};
see \cite{LuBook}, \cite{Lu4}. We now describe some general constructions
involving these leading coefficients. 

Let $E\in\Irr(W)$. Since the sum of all $c_{w,E}^2$ ($w\in W$) is strictly 
positive, we can write that sum as $f_E\,\dim E$ where $f_E\in \R_{>0}$. 
With this notation, we can state:

\begin{prop}[Cf.\ \protect{\cite[5.6]{pycox}}] \label{pyc56} We define
real numbers as follows:
\[ \breve{n}_w:=\sum_{E\in\Irr(W)} f_E^{-1}c_{w,E} \qquad \mbox{for all
$w\in W$}.\]
Then the following hold.
\begin{itemize}
\item[(a)] Let $\Gamma$ be a left cell and $E\in\Irr(W)$. Then the 
multiplicity $m(\Gamma,E)$ (see Remark~\ref{rem0c}) is given by
\[ m(\Gamma,E)=\sum_{w\in \Gamma} \breve{n}_wc_{w,E}.\]
\item[(b)] Every left cell of $W$ contains an element of the set 
$\breve{\cD}:=\{w \in W\mid \breve{n}_w\neq 0\}$. Thus, the number of 
left cells of $W$ is less than or equal to $|\breve{\cD}|$.
\end{itemize}
\end{prop}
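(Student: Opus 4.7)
The plan is to deduce both parts from a single orthogonality identity for the leading coefficients $c_{w,E}$ when restricted to a left cell. Expanding the definition of $\breve{n}_w$, for any left cell $\Gamma$ and any $E \in \Irr(W)$ one has
\begin{equation*}
\sum_{w \in \Gamma} \breve{n}_w\, c_{w,E} \;=\; \sum_{E' \in \Irr(W)} f_{E'}^{-1} \sum_{w \in \Gamma} c_{w,E'}\, c_{w,E},
\end{equation*}
so part (a) is equivalent to the left-cell orthogonality relation
\begin{equation*}
\sum_{w \in \Gamma} c_{w,E'}\, c_{w,E} \;=\; \delta_{E,E'}\, f_E\, m(\Gamma,E). \tag{$\ast$}
\end{equation*}
Summing $(\ast)$ over all left cells of $W$ recovers the global identity $\sum_{w \in W} c_{w,E'} c_{w,E} = \delta_{E,E'} f_E \dim E$, which is a direct consequence of Schur orthogonality for the split semisimple algebra $\cH_K$ together with the definition of $f_E$; so the real content of $(\ast)$ is the compatibility of this orthogonality with the partition of $W$ into left cells.

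Part (b) then falls out of (a) immediately. A left cell $\Gamma$ is non-empty, so $[\Gamma]_1$ is a non-zero $\R[W]$-module, hence there exists some $E \in \Irr(W)$ with $m(\Gamma,E) \geq 1$. By (a), $m(\Gamma,E) = \sum_{w \in \Gamma} \breve{n}_w\, c_{w,E}$ and the left-hand side is strictly positive, so at least one $w \in \Gamma$ must satisfy $\breve{n}_w \neq 0$; that is, $\Gamma \cap \breve{\cD} \neq \emptyset$. Since distinct left cells are disjoint, the number of left cells of $W$ is at most $|\breve{\cD}|$.

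The main obstacle is therefore the orthogonality relation $(\ast)$. To establish it I would pass to Lusztig's asymptotic algebra $J$, whose canonical basis $\{t_w\}_{w \in W}$ splits along two-sided cells as $J = \bigoplus_\cF J_\cF$. Up to the sign $(-1)^{l(w)}$, the leading coefficient $c_{w,E}$ is the value of the character of the simple $J$-module corresponding to $E$ on $t_w$, and $f_E$ is the associated Schur constant. Via Lusztig's homomorphism $\cH_K \to J_K$, the decomposition $[\Gamma]_K \cong \bigoplus_E m(\Gamma,E)\, E_v$ of the cell module as an $\cH_K$-module becomes a matching decomposition as a $J_K$-module; pairing its character with $c_{\cdot,E}$ and restricting the sum to $w \in \Gamma$ produces $(\ast)$ by orthogonality of characters inside $J_K$. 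This is essentially the argument underlying \cite[5.6]{pycox}, which uses in addition the leading matrix coefficient refinements of \cite{my02}. The compatibility of Lusztig's homomorphism and the basis $\{t_w\}$ with the cell filtration ultimately rests on Lusztig's properties P1--P15, which are known unconditionally for finite $W$ thanks to \cite{EW}.
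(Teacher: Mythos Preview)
Your proposal is correct and follows essentially the same route as the paper: both reduce (a) to the left-cell orthogonality relation $(\ast)$, then deduce (b) from (a) by choosing $E$ with $m(\Gamma,E)>0$. The only difference is in how $(\ast)$ is justified: the paper simply cites \cite[5.8]{LuBook} (for finite Weyl groups) and \cite[Cor.~3.8]{myfrob} (in general), whereas you sketch a derivation via Lusztig's asymptotic algebra $J$ and properties P1--P15; these are compatible viewpoints, and your sketch is a reasonable summary of how such orthogonality relations are ultimately established. One minor quibble: you say (a) is ``equivalent'' to $(\ast)$, but strictly you only need (and only argue) the implication $(\ast)\Rightarrow$(a).
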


\begin{proof} (a) Let $E,E'\in\Irr(W)$. By \cite[5.8]{LuBook} (for finite 
Weyl groups) and \cite[Cor.~3.8]{myfrob} (in general), we have the 
following orthogonality relations:
\[ \sum_{w\in \Gamma} c_{w,E}c_{w,E'}=\left\{\begin{array}{cl}
m(\Gamma,E)f_E & \quad \mbox{if $E\cong E'$},\\ 0 & \quad \mbox{otherwise}.
\end{array}\right.\]
Using these relations and the defining formula for $\breve{n}_w$, the 
expression $\sum_{w\in \Gamma}\breve{n}_wc_{w,E}$ reduces to $m(\Gamma,E)$.
(See also \cite[Example~1.8.5]{geja}.) 

(b) Let $\Gamma$ be a left cell. Then there is some $E\in\Irr(W)$ such 
that $m(\Gamma,E)>0$. So (a) shows that there exists some $w\in \Gamma$ 
such that $\breve{n}_w\neq 0$, that is, we have $w\in\Gamma\cap\breve{\cD}$.
\end{proof}

\begin{cor}[Cf.\ Lusztig \protect{\cite[Lemma~5.2]{LuBook}}] \label{prop1} 
Let $w\in W$ and assume that there exists some $E\in \Irr(W)$ such that 
$c_{w,E}\neq 0$. Then $w\in\cF_E$.
\end{cor}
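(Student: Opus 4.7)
The plan is to reduce the statement to the defining property of $\cF_E$. By Definition~\ref{def0}, the two-sided cell $\cF_E$ is precisely the union of all left cells $\Gamma$ for which $E$ occurs as a constituent of the $W$-module $[\Gamma]_1$, that is, for which the multiplicity $m(\Gamma,E)$ of Remark~\ref{rem0c} is strictly positive. Thus, letting $\Gamma_0$ denote the unique left cell of $W$ containing the given element $w$, it suffices to show $m(\Gamma_0,E)>0$; this forces $\Gamma_0\subseteq\cF_E$ and hence $w\in\cF_E$, as desired.

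The main tool is the orthogonality relation for leading coefficients already exploited in the proof of Proposition~\ref{pyc56}(a). Recall that for any left cell $\Gamma$ and any $E,E'\in\Irr(W)$,
\[
\sum_{x\in\Gamma}c_{x,E}\,c_{x,E'}=\begin{cases}m(\Gamma,E)\,f_E & \text{if }E\cong E',\\ 0 & \text{otherwise,}\end{cases}
\]
by \cite[5.8]{LuBook} in the finite Weyl group case and \cite[Cor.~3.8]{myfrob} in general. I would apply this with $\Gamma=\Gamma_0$ and $E'=E$, which gives
\[
\sum_{x\in\Gamma_0}c_{x,E}^{\,2}=m(\Gamma_0,E)\,f_E.
\]
The hypothesis $c_{w,E}\neq 0$, together with $w\in\Gamma_0$, makes the left-hand side a strictly positive real number. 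Since $f_E\in\R_{>0}$ by definition (it is the positive normalising constant appearing in $\sum_{x\in W}c_{x,E}^{\,2}=f_E\dim E$), this forces $m(\Gamma_0,E)>0$, and the previous paragraph concludes the argument.

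There is essentially no genuine obstacle here: once Proposition~\ref{pyc56} is available, the corollary is a one-line consequence obtained by restricting the Schur-type orthogonality to a single cell and exploiting positivity. The only minor things to verify are the strict positivity of $f_E$ and the existence of a well-defined left cell $\Gamma_0$ containing $w$; the former is automatic from the defining formula for $f_E$ and the fact that at least one coefficient $c_{x,E}$ is nonzero (noted immediately after the definition of $\ba_E$), while the latter is immediate since the left cells partition $W$.
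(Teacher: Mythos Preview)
Your proposal is correct and follows essentially the same route as the paper's own proof: take the left cell $\Gamma$ containing $w$, specialise the orthogonality relation to $E'=E$ to obtain $\sum_{x\in\Gamma}c_{x,E}^2=f_E\,m(\Gamma,E)$, and conclude from $c_{w,E}\neq 0$ and $f_E>0$ that $m(\Gamma,E)>0$, whence $\Gamma\subseteq\cF_E$. Your write-up is simply a slightly more expanded version of the paper's two-line argument.
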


\begin{proof} Let $\Gamma$ be the left cell such that $w\in\Gamma$. Then 
the orthogonality relations used in the above proof show that $\sum_{x\in
\Gamma} c_{x,E}^2=f_Em(\Gamma,E)$. Since the sum contains some strictly 
positive term (the one corresponding to $x=w$), we have $m(\Gamma,E)>0$ 
and so $\Gamma\subseteq \cF_E$. 
\end{proof}

The above methods allow us to solve the problem raised at the beginning
of this section for those elements $w\in W$ for which there exists
some $E\in\Irr(W)$ such that $c_{w,E}\neq 0$. The new ingredient to deal 
with {\em all} elements of $W$ are the leading matrix coefficients 
introduced in \cite{my02}. For each $E\in \Irr(W)$, we consider a matrix 
representation
\[ \rho_v^E\colon \cH_K\rightarrow M_{d_E}(K)\qquad (d_E=\dim E)\]
affording $E_v\in\Irr(\cH_K)$. Now recall that $K$ is the field of 
fractions of $\R[v]$. Let  us consider the discrete valuation ring 
\[ \cO=\{f/g\in K\mid f,g\in\R[v], g(0)\neq 0\}\subseteq K.\]
In particular, we have a ring homomorphism $\cO\rightarrow \R$ given
by evaluation at $0$. Following \cite[2.2, 4.3]{myedin}, we say that
$\rho_v^E$ is a {\em balanced representation} if there exists a
symmetric matrix $\Omega^\lambda \in M_{d_E}(\cO)$ such that 
\[\det(\Omega^E)\in\cO^\times \qquad \mbox{and} \qquad \Omega^E \,
\rho^E_v (T_{w^{-1}})=\rho_v^E(T_w)^{\operatorname{tr}} 
\,\Omega^E \quad \mbox{for all $w \in W$}.\]
By \cite[2.3]{myedin} (see also \cite[\S 1.4]{geja}), we can always choose 
$\rho_v^E$ to be balanced;  let us now assume that this is the case. Then 
we have  
\[v^{\ba_E}\rho_v^E(T_w) \in M_{d_E}(\cO)\qquad\mbox{for all $w\in W$}.\] 
For $1\leq i,j\leq d_E$, we denote by $c_{w,E}^{ij}\in \R$ the value
at $0$ of the $(i,j)$-entry of the matrix $(-1)^{l(w)}v^{\ba_E}\rho_v^E
(T_w)$. These real numbers are called {\em leading matrix coefficients}. 
Note that
\[ c_{w,E}=\sum_{1\leq i \leq d_E} c_{w,E}^{ii} \qquad \mbox{for all
$w\in W$},\]
where $c_{w,E}$ are Lusztig's leading coefficients of character values as
introduced earlier.

\begin{defn}[Cf.\ \protect{\cite[Def.~3.1]{p115}, \cite[\S 1.6]{geja}}] 
\label{deflmc} Let $w\in W$ and $E\in \Irr(W)$. Then write 
$E\leftrightsquigarrow w$ if $c_{w,E}^{ij}\neq 0$ for some $i,j\in 
\{1,\ldots,d_E\}$. By \cite[3.9]{myedin}, the relation 
$\leftrightsquigarrow$ does not depend on the choice of the balanced 
representation $\rho_v^E$.
\end{defn}

\begin{lem} \label{prop2} Let $w\in W$. Then there exists some 
$E\in \Irr(W)$ such that $E\leftrightsquigarrow w$. For any
such $E$, we have $w\in \cF_E$. Consequently, we have $\ba(w)=\ba_E$
(see Definition~\ref{aval}).
\end{lem}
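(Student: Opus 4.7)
The plan is to follow the proof of Corollary~\ref{prop1} as a template, replacing Lusztig's scalar leading coefficients $c_{w,E}$ by the matrix-valued refinements $c_{w,E}^{ij}$, and invoking the corresponding matrix-level orthogonality relations from \cite{my02} (cf.~\cite[\S 1.6]{geja}) in place of the scalar ones used in Proposition~\ref{pyc56}.

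For the existence of some $E\in\Irr(W)$ with $E\leftrightsquigarrow w$, I would appeal to a Schur-type orthogonality relation of the form
\[ \sum_{w\in W} c_{w,E}^{ij}\,c_{w^{-1},E'}^{kl} \;=\; \delta_{E,E'}\,f_E\,\delta_{il}\,\delta_{jk} \]
(up to minor convention-dependent adjustments), which is one of the main outputs of \cite{my02}. Combined with the identity $\sum_{E\in\Irr(W)} d_E^2=|W|$, this means that the linear map
\[ \R[W]\;\longrightarrow\;\bigoplus_{E\in\Irr(W)} M_{d_E}(\R),\qquad T_w\;\longmapsto\; \bigl((-1)^{l(w)}c_{w,E}^{ij}\bigr)_{E,i,j}, \]
is an isomorphism of $\R$-vector spaces, since the images of the $T_w$ are mutually orthogonal vectors of nonzero norm in an $|W|$-dimensional target. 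In particular, the image of $T_w$ is nonzero for every $w\in W$, so some entry $c_{w,E}^{ij}$ must be nonzero; this yields the desired $E$.

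For the implication $E\leftrightsquigarrow w\Rightarrow w\in\cF_E$, let $\Gamma$ be the left cell containing $w$. The plan is to apply the cell-wise refinement of the above orthogonality (the matrix analogue of the scalar relation $\sum_{x\in\Gamma}c_{x,E}c_{x,E'}=\delta_{E,E'}f_E\,m(\Gamma,E)$ used in the proof of Proposition~\ref{pyc56}), which produces a nonnegative quadratic expression in the coefficients $c_{x,E}^{ij}$ for $x\in\Gamma$ that is equal to a strictly positive multiple of $m(\Gamma,E)$. Since $(c_{w,E}^{ij})^2>0$ contributes to this expression, it follows that $m(\Gamma,E)>0$; by Definition~\ref{def0}, this forces $\Gamma\subseteq\cF_E$ and hence $w\in\cF_E$. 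The final equality $\ba(w)=\ba_E$ is then immediate from Definition~\ref{aval}.

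The main obstacle is establishing the matrix-level orthogonality relations for the $c_{x,E}^{ij}$, both in the global form (for the existence part) and in the cell-wise form (for the implication); these go genuinely beyond Lusztig's scalar orthogonality \cite[5.8]{LuBook} and rely essentially on the existence of balanced representations $\rho_v^E$ together with the careful analysis at the specialization $v=0$ carried out in \cite{my02}. Granted those ingredients, the lemma is a direct matrix-level transcription of the argument for Corollary~\ref{prop1}.
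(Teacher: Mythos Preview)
Your overall strategy matches the paper's: both parts are reduced to orthogonality-type results for the leading matrix coefficients established in \cite{my02}, \cite{myedin}, \cite{p115}, just as Corollary~\ref{prop1} was reduced to the scalar orthogonality relations. Two points of comparison are worth noting.

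For the existence part, the paper uses the \emph{second} orthogonality relation from \cite[3.3]{myedin} directly,
\[
1=\sum_{E\in\Irr(W)}\sum_{1\leq i,j\leq d_E} c_{w,E}^{ij}\,c_{w^{-1},E}^{ji},
\]
which immediately forces some $c_{w,E}^{ij}\neq 0$. Your route via the first (Schur-type) orthogonality plus a dimension count also works, but your sentence ``the images of the $T_w$ are mutually orthogonal'' is not what the first relation says: it asserts orthogonality of the \emph{column} vectors indexed by $(E,i,j)$, not of the row vectors indexed by~$w$. The conclusion (the matrix is invertible, hence every row is nonzero) is still correct, just not for the stated reason.

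For the implication $E\leftrightsquigarrow w\Rightarrow w\in\cF_E$, your ``nonnegative quadratic expression'' heuristic is more delicate than in the scalar case. The scalar relation $\sum_{x\in\Gamma} c_{x,E}^2=f_E\,m(\Gamma,E)$ is a genuine sum of squares because $c_{x,E}=c_{x^{-1},E}$. In the matrix setting the relation between $c_{x,E}^{ij}$ and $c_{x^{-1},E}^{kl}$ goes through the symmetric matrix $\Omega^E$ (via the balancedness condition), and nothing guarantees that the resulting cell-wise quadratic form is positive semidefinite with $(c_{w,E}^{ij})^2$ appearing as a summand. The paper therefore does not attempt this transcription, but simply invokes \cite[Prop.~4.7]{my02} and \cite[Lemma~3.2]{p115}, whose proofs are more structural (via the asymptotic algebra and the cellular structure of $\cH$) than a naive sum-of-squares argument. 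Your acknowledgement that ``the main obstacle is establishing the matrix-level orthogonality relations \ldots\ in the cell-wise form'' is on target; just be aware that the actual argument in those references is not literally the positivity argument you sketch.
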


\begin{proof} By the orthogonality relations in \cite[3.3]{myedin},
we have
\[1=\sum_{E\in\Irr(W)} \sum_{1\leq i,j\leq d_E} c_{w,E}^{ij}
c_{w^{-1},E}^{ji}.\]
Hence, there exists some $E\in\Irr(W)$ such that $E\leftrightsquigarrow w$.
Now let $E\in\Irr(W)$ be arbitrary such that $E\leftrightsquigarrow w$.
Let $\Gamma$ be the left cell such that $w\in\Gamma$. Then, by 
\cite[Prop.~4.7]{my02}, \cite[Lemma~3.2]{p115}, we have $m(\Gamma,E)>0$ 
and so $w\in\cF_E$.
\end{proof}

\begin{rem} \label{remHY} Let $W$ be of exceptional type. Then Lusztig 
\cite[\S 5]{Lu0} (type $H_3$), Alvis--Lusztig \cite{AlLu} ($H_4$), Naruse 
\cite{Naruse0} ($F_4$, $E_6$), Howlett--Yin \cite{HowYin} ($E_6$, $E_7$) 
and Howlett \cite{How} ($E_8$) explicitly determined matrix representations 
$\rho_v^E$ for all $E\in\Irr(W)$ (in terms of abstract $W$-graphs, as 
defined in \cite{KL}). Thus, for each $E\in\Irr(W)$, we are given explicit 
matrices $\{\rho_v^E(T_s) \mid s\in S\}$, with entries in $\R[v,v^{-1}]$. 
In \cite[Example~4.6]{myedin} (types $H_3$, $H_4$) and 
\cite[\S 4]{gemu} (types $F_4$, $E_6$. $E_7$, $E_8$), it is shown how to 
construct symmetric matrices $\Omega^E\in M_{d_E}(\R[v])$ such that some 
entry of $\Omega^E$ has a non-zero constant term and such that 
\[\Omega^E \,\rho^E_v (T_{s})=\rho_v^E(T_s)^{\operatorname{tr}} 
\,\Omega^E \qquad \mbox{for all $s\in S$}.\]
One can check that, in all cases, $\det(\Omega^E)\in\R[v]$ has a non-zero
constant term. Hence, the above matrix representations are balanced in the 
sense defined above. (It is conjectured in \cite[4.6]{myedin} that every
$W$-graph representation is automatically balanced; see also 
\cite[1.4.14]{geja}.) In Michel's development version of the computer
algebra package {\sf CHEVIE} \cite{mich}, these representations are obtained 
via the command {\tt Representations}. Thus, given an element $w\in W$, we 
consider a reduced expression $w=s_1s_2\cdots s_{l}$ with $s_i\in S$ 
and obtain $\rho_v^E(T_w)=\rho_v^E(T_{s_1})\cdots \rho_v^E(T_{s_l})$. 
We can then check if the relation $E\leftrightsquigarrow w$ holds.---Note
that, in type $E_8$, we have $\max\{l(w)\}=120$ and $\max\{\dim E\}=7168$;
in extreme cases, the computation of the matrix $\rho_v^E(T_w)$ may take 
$10$ minutes or even more. Therefore, we will have to try to reduce as much 
as possible the number of elements $w\in W$ for which we need to apply this
method to work out $\ba(w)$. The techniques for achieving such a reduction 
will be discussed in the following section.

(The computation \cite{gemu} of the symmetric matrices $\Omega^E$ for
type $E_8$ took several months; these matrices are available upon request. 
Note that we do not need to know them for establishing the relation 
$E\leftrightsquigarrow w$; as far as type $E_8$ is concerned, they were
{\em only} needed to prove that Howlett's matrix representations indeed 
are balanced.)
\end{rem}

\section{Star operations and induction of cells} \label{sec2}

We now turn to the determination of the left cells of $W$. A first 
approximation is obtained by the following constructions, which already 
appeared in \cite{KL}. For any $w\in W$, we denote by $\cR(w):=\{s\in 
S\mid ws<w\}$ the {\em right descent set} of $w$.

\begin{prop}[\protect{\cite[Prop.~2.4]{KL}}] \label{klright}
Let $x,y\in W$. If $x\sim_{L} y$, then $\cR(x)=\cR(y)$. Thus, for any 
$I\subseteq  S$, the set $\{w\in W\mid \cR(w)=I\}$ is a union of
left cells of $W$.
\end{prop}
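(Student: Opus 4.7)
The plan is to reduce the statement to a one-step assertion about the generating relation $\leftarrow_L$ and then handle the three cases in its definition.

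First, I would observe that since $\sim_L$ is the equivalence relation associated to the pre-order $\leq_L$, which in turn is the transitive closure of $\leftarrow_L$, it suffices to prove the following one-step claim: whenever $x \leftarrow_L y$, we have $\cR(y)\subseteq\cR(x)$. Granting this, an arbitrary $x\sim_L y$ means $x\leq_L y$ and $y\leq_L x$; chaining the one-step inclusions along a witnessing sequence (and the reverse sequence) yields $\cR(x)=\cR(y)$. So everything comes down to verifying the three cases in the definition of $\leftarrow_L$.

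The case $x=y$ is immediate. For the case $x=sy>y$ with $s\in S$, I would use a pure length argument: for any $t\in \cR(y)$, one has $l(yt)=l(y)-1$, $l(sy)=l(y)+1$, and $l(syt)$ is constrained both by being $l(sy)\pm 1$ and by being $l(yt)\pm 1=l(s\cdot yt)\pm 1$; the only integer satisfying both is $l(y)$, i.e.\ $l(sy)-1$. Hence $syt<sy$, so $t\in\cR(x)$, and thus $\cR(y)\subseteq\cR(x)$.

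The main obstacle is the third case: $x<y$ with $\mu(x,y)\neq 0$. Here I would argue by contradiction. Suppose $s\in\cR(y)$ (so $ys<y$) but $xs>x$. By the symmetry $P_{a,b}=P_{a^{-1},b^{-1}}$, the recursion recalled in Remark~\ref{rem1} (applied to $y^{-1}$, $x^{-1}$ with the letter $s$ on the left) gives the right-handed version: if $ws<w$ and $zs>z$, then $P_{z,w}=P_{zs,w}$. Applied with $w=y$ and $z=x$, this yields $P_{x,y}=P_{xs,y}$. But $\deg P_{xs,y}\leq l(y)-l(xs)-1=l(y)-l(x)-2$, which is strictly less than $l(y)-l(x)-1$, so the coefficient $\mu(x,y)$ of $v^{l(y)-l(x)-1}$ in $P_{x,y}$ would be zero, contradicting $\mu(x,y)\neq 0$. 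Hence $xs<x$, so $s\in\cR(x)$, establishing $\cR(y)\subseteq\cR(x)$ in this case as well.

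The second statement of the proposition is then immediate: $\cR$ is a function on left cells, so each fiber $\{w\in W\mid \cR(w)=I\}$ is a union of left cells.
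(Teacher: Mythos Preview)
The paper supplies no proof here, only the citation to \cite[Prop.~2.4]{KL}. Your reduction to a one-step claim is natural and Cases~1 and~2 are fine, but Case~3 has a genuine gap: the degree bound $\deg P_{xs,y}\le l(y)-l(x)-2$ tacitly assumes $xs<y$ strictly, whereas Bruhat lifting only gives $xs\le y$. The boundary case $xs=y$ does occur---take $W$ of type $A_2$ with simple reflections $s_1,s_2$, and set $x=s_1$, $y=s_1s_2$: then $x<y$ with $l(y)-l(x)=1$, so $\mu(x,y)=1$ and $x\leftarrow_L y$ by the paper's second bullet, yet $\cR(y)=\{s_2\}\not\subseteq\{s_1\}=\cR(x)$. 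Here $P_{xs_2,y}=P_{y,y}=1$ has degree $0=l(y)-l(x)-1$ and your contradiction evaporates. So the one-step inclusion you are trying to prove is simply \emph{false} for $\leftarrow_L$ as the paper writes it.

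What makes the argument go through in \cite{KL} is that their generating step in the $\mu$-case also carries a left-descent witness: some $t\in S$ with $tx<x$ and $ty>y$ (this is exactly what the multiplication formula for $C_t'C_y'$ produces when $ty>y$). That extra datum rules out the boundary case: if $xs=y$ then $x=ys$, and from $tx<x$ one gets $l(tys)=l(tx)=l(x)-1=l(y)-2$, while $tys=(ty)s$ with $ty>y$ forces $l(tys)\ge l(ty)-1=l(y)$, a contradiction. Once $xs<y$ strictly is secured, your degree argument is valid. In short, your approach is essentially Kazhdan--Lusztig's, but it cannot be completed from the paper's abbreviated definition of $\leftarrow_L$; you need the left-descent hypothesis built into the original one-step relation (or, equivalently, the left-ideal characterisation of $\leq_L$ via the $C_w'$-basis).
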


The above result can be refined considerably. Following Kazhdan--Lusztig 
\cite[4.1]{KL}, let us consider two generators $s,t\in S$ such that $st$ 
has order~$3$. We set 
\[ \cD_R(s,t):=\{w\in W\mid \cR(w)\cap \{s,t\} \mbox{ has exactly one
element}\}.\]
If $w\in \cD_R(s,t)$, then exactly one of the elements $ws,wt$ belongs to
$\cD_R(s,t)$; we denote it $w^*$. The map
\[ \sigma_{s,t}\colon \cD_R(s,t) \rightarrow \cD_R(s,t), \qquad  
w\mapsto  w^*,\]
is an involution, called {\em star operation}. Now let $\Gamma$ be a left 
cell. By Proposition~\ref{klright}, we have $\Gamma\subseteq \cD_R(s,t)$ or 
$\Gamma \cap \cD_R(s,t)=\varnothing$.

\begin{prop}[\protect{\cite[Cor.~4.3]{KL}}] \label{klstar}
Let $s,t\in S$ be as above and $\Gamma$ be a left cell such that
$\Gamma\subseteq \cD_R(s,t)$. Then $\Gamma^*:=\{w^*\mid w\in \Gamma\}$ 
also is a left cell and the map $[\Gamma]_A \rightarrow [\Gamma^*]_A$, 
$e_w\mapsto e_{w^*}$, is an isomorphism of $\cH$-modules. Furthermore,
$w\sim_{R} w^*$ for all $w\in \Gamma$.
\end{prop}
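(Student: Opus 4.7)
The plan is to prove the three assertions by combining a direct argument for $w \sim_R w^*$ (part (iii)) with a crucial invariance property of $\mu$-values under the star operation (the technical heart of the proof, due to \cite[\S 4]{KL}) to establish (i) and (ii).

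First I would establish (iii). Fix $w \in \Gamma$; by construction $w^* \in \{ws, wt\}$, so $(w^*)^{-1}$ is obtained from $w^{-1}$ by left multiplication by $s$ or $t$. Depending on whether this multiplication increases or decreases length, the third clause in the definition of $\leftarrow_L$ yields either $w^{-1} \leftarrow_L (w^*)^{-1}$ or $(w^*)^{-1} \leftarrow_L w^{-1}$. The reverse inequality is obtained by applying the same argument with $w^*$ in place of $w$, using $(w^*)^* = w$. Hence $w^{-1} \sim_L (w^*)^{-1}$, which is exactly $w \sim_R w^*$.

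The technical heart is the following invariance property, whose proof occupies \cite[\S 4]{KL}: for $y, w \in \cD_R(s,t)$, one has $\mu(y, w) \neq 0$ if and only if $\mu(y^*, w^*) \neq 0$, and in that case the values coincide. This is proved by a careful case-by-case analysis of the Kazhdan--Lusztig recursion (Remark \ref{rem1}), using the braid relation $sts = tst$ in an essential way. I expect this to be the main obstacle; once it is in hand, the rest of the proof is fairly mechanical.

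Granting this, I would prove (i) and (ii) together. Define $\phi \colon [\Gamma]_A \to [\Gamma^*]_A$ by $e_w \mapsto e_{w^*}$ and extend $A$-linearly. By (iii), combined with the evident right-cell analog of Proposition \ref{klright} (obtained by inverting), the left descent sets $\{u \in S : uw < w\}$ and $\{u \in S : uw^* < w^*\}$ coincide; hence $uw < w$ iff $uw^* < w^*$, so the two cases of the $T_u$-action formula in Remark \ref{rem0b} match up under $\phi$. In the non-trivial case $uw > w$, the sum $\sum_{x \in \Gamma,\, ux < x} \tilde\mu(x, w)\, e_x$ is sent by $\phi$ to $\sum_{x^* \in \Gamma^*,\, ux^* < x^*} \tilde\mu(x^*, w^*)\, e_{x^*}$, by the $\mu$-invariance above together with the matching of left descent sets applied to each $x$. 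Thus $\phi$ is an $\cH$-isomorphism. The same $\mu$-invariance and descent matching show that the relation $y \leftarrow_L w$ on $\cD_R(s,t)$ is preserved under $*$, so $\Gamma^*$ is a single $\sim_L$-class---a left cell, as required.
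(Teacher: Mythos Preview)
The paper itself does not prove this proposition; it is cited from \cite[Cor.~4.3]{KL}. Your strategy---take the $\mu$-invariance of \cite[\S 4]{KL} as the technical core and combine it with the matching of left descent sets to obtain (i) and (ii)---is essentially Kazhdan--Lusztig's own argument, with the small twist that you aim to establish (iii) first and then deduce the descent matching from it (via the right-cell analogue of Proposition~\ref{klright}) rather than proving the descent matching directly as part of the technical lemma.

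Your argument for (iii), however, has a real gap. From the third clause of $\leftarrow_L$ you correctly get one of the relations $(w^*)^{-1}\leftarrow_L w^{-1}$ or $w^{-1}\leftarrow_L (w^*)^{-1}$---whichever places the longer of the two below the shorter. You then claim the reverse ``by applying the same argument with $w^*$ in place of $w$, using $(w^*)^*=w$''. But exchanging $w$ and $w^*$ does not alter which of $w^{-1},(w^*)^{-1}$ is longer, so the third clause again yields only the \emph{same} direction, not the opposite one. Indeed, if this symmetry argument worked it would prove $w\sim_R wu$ for every $u\in S$, which already fails in $\fS_2$. The reverse inequality genuinely needs the hypothesis $w,w^*\in\cD_R(s,t)$. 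For instance, if $ws<w$, $wt>w$ and $w^*=wt$, then $sw^{-1}<w^{-1}$ while $s(w^*)^{-1}>(w^*)^{-1}$ (because $w^*\in\cD_R(s,t)$ forces $w^*s>w^*$), and $\mu(w^{-1},(w^*)^{-1})=1$; hence $C_{w^{-1}}'$ occurs in $C_s'C_{(w^*)^{-1}}'$, giving $w^{-1}\leq_L (w^*)^{-1}$. The case $w^*=ws$ is handled symmetrically with the roles of $s$ and $t$ exchanged. With (iii) repaired in this way, the rest of your argument for (i) and (ii) goes through.
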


\begin{rem} \label{leftkl} Let $y,w\in W$. We write 
$y\stackrel{*}{\leftrightarrow} w$ if $w$ is obtained from $y$ via repeated
applications of star operations, that is, there is a sequence of elements 
$y=y_0,y_1,\ldots, y_m=w$ in $W$ such that, for each $i\in\{1,\ldots,m\}$, 
we have $y_i=\sigma_{s_i,t_i}(y_{i-1})$ for some $s_i,t_i\in S$ such that 
$s_it_i$ has order $3$ and $y_{i-1}\in \cD_R(s_i,t_i)$. The set 
\[ R^*(w):=\{y\in W \mid y\stackrel{*}{\leftrightarrow} w\}\]
will be called the {\em (right) star orbit} of~$w$. By 
Proposition~\ref{klstar}, $R^*(w)$ is contained in a right cell of $W$. 
Similarly, the {\em left star orbit} of $w$ is defined by 
\[L^*(w):= \{y \in W\mid y^{-1}\stackrel{*}{\leftrightarrow} w^{-1}\}.\]
Since $y\sim_L w\Leftrightarrow y^{-1}\sim_R w^{-1}$, the set $L^*(w)$ 
is contained in a left cell. In {\sf PyCox}, these operations are 
performed by the commands {\tt klstarorbitelm} and {\tt leftklstarorbitelm}.
\end{rem}

\begin{rem} \label{klstar1} Let $\fCl(W)$ be the set of all left cells of 
$W$. The above result shows that the star operations permute the elements
of $\fCl(W)$. We shall denote by $\fCla(W)\subseteq \fCl(W)$ a subset 
such that any $\Gamma\in \fCl(W)$ can be reached from a unique element
of $\fCla(W)$ via repeated applications of star operations. From a practical 
point of view, this is particularly useful for storing results on a 
computer: Since star operations are performed quite easily and efficiently,
it will only be necessary to store the cells in $\fCla(W)$. The {\sf PyCox} 
command {\tt klcells} returns such a subset $\fCla(W)$ of the set of all 
left cells of $W$. 

For example, if $W$ is of type $E_6$, then the output 
of the {\tt klcells} command in Remark~\ref{rem1} tells us that $|\fCl(W)|=
652$ and $|\fCla(W)|=21$. 
\end{rem}

In combination, Propositions~\ref{klright} and \ref{klstar} show that, 
if $\Gamma$ is a left cell, then we not only have $\cR(y)=\cR(w)$ but also
$\cR(y^*)=\cR(w^*)$ for all $y,w\in\Gamma$. Iterating this process leads 
us to the following notion.

\begin{defn}[Cf.\ Vogan \protect{\cite[3.10]{voga}}, simply-laced 
version\footnote{The general version would also take into account 
generators $s,t\in S$ such that $st$ has order strictly bigger than~$3$; 
see \cite[3.10]{voga} and Remark~\ref{string} below, but the relations 
(and the proofs) are somewhat more complicated; the ''simply-laced''
version is sufficient for our purposes here.}] \label{deftau} Let $n\geq 0$ 
and $y,w\in W$. We define a relation $y \approx_n w$ inductively as follows. 
First, let $n=0$. Then $y\approx_0 w$ if $\cR(y)=\cR(w)$. Now let $n>0$ and 
assume that $y'\approx_{n-1} w'$ has been already defined for all
$y', w'\in W$. Then $y\approx_n w$ if $y\approx_{n-1} w$ and if 
$\sigma_{s,t}(y) \approx_{n-1} \sigma_{s,t}(w)$ for any $s,t\in S$ such
that $st$ has order $3$ and $y,w\in \cD_R(s,t)$.

If $y\approx_n w$ for all $n\geq 0$, then we say that $y,w$ have
the same {\em generalized $\tau$-invariant}. This defines an equivalence
relation on $W$; the corresponding equivalence classes will be called
$\tau$-cells. 

\end{defn}

\begin{cor} \label{cortau} Let $\Gamma$ be a left cell of $W$. Then
$\Gamma$ is contained in a $\tau$-cell.
\end{cor}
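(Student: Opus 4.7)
The plan is to prove Corollary~\ref{cortau} by induction on $n$: I would show that if $y,w\in W$ lie in the same left cell $\Gamma$, then $y\approx_n w$ for every $n\geq 0$.

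For the base case $n=0$, the relation $y\approx_0 w$ reduces to $\cR(y)=\cR(w)$, which is exactly the content of Proposition~\ref{klright}. Thus every left cell is automatically contained in a single equivalence class of $\approx_0$.

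For the inductive step, assume that any two elements lying in the same left cell are related by $\approx_{n-1}$, and let $y,w\in\Gamma$. By hypothesis we already have $y\approx_{n-1}w$, so it remains to verify that for every pair $s,t\in S$ with $st$ of order $3$ such that $y,w\in\cD_R(s,t)$, one has $\sigma_{s,t}(y)\approx_{n-1}\sigma_{s,t}(w)$. Here I would invoke Proposition~\ref{klright} once more: since $\cR(y)=\cR(w)$, the condition $y\in\cD_R(s,t)$ is equivalent to $w\in\cD_R(s,t)$, and in this situation the whole left cell $\Gamma$ is contained in $\cD_R(s,t)$. Proposition~\ref{klstar} then applies and tells us that $\Gamma^{*}=\sigma_{s,t}(\Gamma)$ is again a left cell. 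In particular $\sigma_{s,t}(y)$ and $\sigma_{s,t}(w)$ both lie in this new left cell, so by the induction hypothesis they satisfy $\sigma_{s,t}(y)\approx_{n-1}\sigma_{s,t}(w)$, as required.

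Putting the two cases together yields $y\approx_n w$ for all $n\geq 0$, so $y$ and $w$ have the same generalized $\tau$-invariant, and $\Gamma$ is contained in a single $\tau$-cell. There is no real obstacle here: both ingredients (stability of $\cR$ on left cells, and the fact that star operations map left cells to left cells) are already provided by Propositions~\ref{klright} and~\ref{klstar}, so the argument is a clean induction bundling these two facts.
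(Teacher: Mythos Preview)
Your proposal is correct and is precisely the argument the paper has in mind: the paper's proof simply says ``This is clear by Propositions~\ref{klright} and~\ref{klstar}'', and your induction on $n$ is the natural way to unpack that sentence. The only extra detail you supply beyond the paper is the observation that $\cR(y)=\cR(w)$ forces the whole of $\Gamma$ into $\cD_R(s,t)$ whenever one of $y,w$ lies there, which is exactly what is needed to apply Proposition~\ref{klstar}.
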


\begin{proof} This is clear by Propositions~\ref{klright} and~\ref{klstar}.
\end{proof}

Thus, by computing the $\tau$-cells of $W$, we obtain a 
first approximation to the partition of $W$ into left cells. This is 
quite powerful. For example, by Kazhdan--Lusztig \cite[\S 5]{KL}, the 
left cells in type $A_{n-1}$ (where $W$ is the symmetric group $\fS_n$) 
are precisely given by the $\tau$-cells. A similar result also holds for 
$W$ of type $E_6$, as shown by Tong \cite[\S 7]{tong}. In {\sf PyCox}, 
the $\tau$-cells are computed by the command {\tt gentaucells} (which just
implements the procedure described in Definition~\ref{deftau}; note that,
when $W$ is finite, one only needs to verify $y\approx_n w$ for a finite
number of $n$.) For example, Tong's result is recovered as follows.
\begin{verbatim}
    >>> W=coxeter("E", 6)
    >>> len(gentaucells(W, allwords(W)))
    652        # the number of left cells
\end{verbatim}

In general, the $\tau$-cells will just be unions of left cells.

Finally, we show how to obtain the complete set of all left cells of $W$
(and not just their representatives in $\breve{\cD}$ or $W^*$). This will 
rely on two techniques: (1) orbits under the star operations (see
Remark~\ref{klstar1}) and (2) the concept of ''induction of cells'' 
\cite{myind}. Let us explain how (2) works. Let $I\subseteq S$ and consider 
the parabolic subgroup $W_I\subseteq W$.  Then 
\[ X_I:=\{w\in W\mid \cR(w)\cap I=\varnothing\}\]
is the set of distinguished left coset representatives of $W_I$ in $W$. 
The map $X_I \times W_I \rightarrow W$, $(x,u) \mapsto xu$, is a bijection 
and we have $l(xu)=l(x)+l(u)$ for all $x\in X_I$ and $u\in W_I$; see 
\cite[\S 2.1]{gepf}. Thus, given $w\in W$, we can write uniquely $w=xu$
where $x\in X_I$ and $u\in W_I$. In this case, we denote $\prI_I(w):=u$.
Let $\sim_{L,I}$ be the equivalence relation on $W_I$ for which the 
equivalence classes are the left cells of $W_I$.

\begin{prop}[\protect{\cite{myind}}] \label{cellind} Let $I\subseteq S$.
If $w,w'\in W$ are such that $w\sim_L w'$, then $\prI_I(w)\sim_{L,I}
\prI_I(w')$. In particular, if $\Gamma$ is a left cell of $W_I$, then 
$X_I\Gamma$ is a union of left cells of $W$.
\end{prop}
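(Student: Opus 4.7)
The plan is to show that the elementary relation $\leftarrow_L$ on $W$ descends through $\prI_I$: explicitly, if $y\leftarrow_L w$ in $W$ then either $\prI_I(y)=\prI_I(w)$ or $\prI_I(y)\leftarrow_L\prI_I(w)$ (or at least $\prI_I(y)\leq_L\prI_I(w)$) in $W_I$. Once this is established, taking transitive closures gives $y\leq_L w\Rightarrow \prI_I(y)\leq_L\prI_I(w)$ inside $W_I$; applying this in both directions to $w\sim_L w'$ then yields the first assertion. The ``in particular'' statement is an immediate consequence, since $X_I\Gamma=\prI_I^{-1}(\Gamma)$ is a union of $\sim_L$-classes of $W$ whenever $\Gamma$ is a single $\sim_{L,I}$-class.

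For the clauses of $\leftarrow_L$ coming from the left action of a simple reflection, I would exploit the combinatorics of $X_I$ directly. Writing $w=xu$ with $x\in X_I$, $u\in W_I$, the exchange condition in Coxeter groups shows that for $s\in S$ one has either $sx\in X_I$ (in which case $sw=(sx)u$ and $\prI_I(sw)=u=\prI_I(w)$) or $sx=xt$ for some $t\in I$ (in which case $sw=x(tu)$, so that $\prI_I(sw)=tu$ by the standard length formula for the factorisation $X_I W_I$). In the latter case, if moreover $l(sw)>l(w)$, then $l(tu)>l(u)$, so $tu\leftarrow_L u$ in $W_I$ via the third clause of the definition, as required; the case $sw<w$ is symmetric.

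The main obstacle is the $\mu$-coefficient condition in the second clause of $\leftarrow_L$: one must analyse what $\mu(y,w)\neq 0$ in $W$ implies for the coset decompositions $y=x_1 v$ and $w=x_2 u$. The essential input is a factorisation-type identity for Kazhdan--Lusztig polynomials of the shape $P_{xv,xu}=P_{v,u}^{W_I}$ when the $X_I$-parts coincide, together with careful control of the cross-coset polynomials $P_{x_1 v,x_2 u}$ with $x_1\neq x_2$. Both can be approached via the recursion formula recalled in Remark~\ref{rem1}, using induction on $l(w)$ and the multiplication formulas for the basis elements $C_w'$ of $\cH$ that were already needed for the analysis in the previous paragraph. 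From such a comparison one extracts that a nonzero $\mu(y,w)$ in $W$ forces $\prI_I(y)\leq_L \prI_I(w)$ in $W_I$, possibly with equality of projections. Combining this with the preceding paragraph yields the desired descent of $\leftarrow_L$ through $\prI_I$, and hence the proposition.
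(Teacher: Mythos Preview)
The paper does not supply a proof of this proposition; it merely quotes the result from \cite{myind}. So the comparison has to be against the argument in that reference, not against anything in the present text.

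Your reduction to the elementary relation $\leftarrow_L$ and your treatment of the third clause ($y=sw>w$) are correct: the Deodhar dichotomy for $sx$ with $x\in X_I$ does exactly what you say, and the same analysis handles the sub-case $y=sw<w$ of the second clause (where $\mu(sw,w)=1$). The ``in particular'' part is also correctly derived from the main assertion.

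The genuine gap is the second clause in full generality. You correctly flag it as the main obstacle, but what you write afterwards is not a proof: the factorisation $P_{xv,xu}=P^{W_I}_{v,u}$ only controls the same-coset contributions, while the whole difficulty lies in the cross-coset terms $P_{x_1v,\,x_2u}$ with $x_1\neq x_2$. Saying that these ``can be approached via the recursion'' and that ``one extracts'' the desired inequality is precisely the step that needs an argument; attempting it directly by induction on $l(w)$ with the recursion of Remark~\ref{rem1} runs into the problem that the correction terms $\mu(z,sw)C'_z$ reintroduce elements whose $W_I$-projection you do not yet control. As stated, the proposal is circular at this point.

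The proof in \cite{myind} sidesteps the direct $\mu$-analysis. One works with the intermediate $A$-basis $\{T_xC'_u: x\in X_I,\;u\in W_I\}$ of $\cH$, where $C'_u$ is the Kazhdan--Lusztig element of $\cH_I\subseteq\cH$. For a fixed $u_0\in W_I$, the $A$-span of those $T_xC'_u$ with $u\leq_{L,I}u_0$ equals $\cH\cdot\bigl(\sum_{u\leq_{L,I}u_0}AC'_u\bigr)$ and is therefore automatically a left ideal of $\cH$; the Deodhar dichotomy you already used shows this concretely generator by generator. The substantive step is then to show that this left ideal coincides with the $A$-span of the $C'_w$ with $\prI_I(w)\leq_{L,I}u_0$; this is done via the bar involution and the triangularity coming from $P_{xv,xu}=P^{W_I}_{v,u}$, rather than by chasing individual $\mu$-coefficients. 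Once the two spans agree, $y\leq_L w$ gives $C'_y\in\cH C'_w$, which lies in the ideal for $u_0=\prI_I(w)$, and reading off the $C'$-expansion yields $\prI_I(y)\leq_{L,I}\prI_I(w)$.
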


%
We now show that the induction of cells is compatible with the star 
operations.

\begin{rem} \label{remapp} Let $I\subseteq S$ and $s,t\in I$ be generators 
such that $st$ has order $3$. Then we can define star operations with 
respect to $W_I$ and $s,t$.  As above we set 
\[ \cD_R^I(s,t):=\{u\in W_I\mid \cR(u)\cap \{s,t\} \mbox{ has exactly one
element}\}\]
and obtain a bijection $\sigma_{s,t}^I\colon \cD_R^I(s,t)\rightarrow
\cD_R^I(s,t)$, $u\mapsto u^*$. Now note that $\cD_R^I(s,t)=\cD_R(s,t) 
\cap W_I$ and  $\sigma_{s,t}^I$ is the restriction of $\sigma_{s,t}$ from 
$\cD_R(s,t)$ to $\cD_R^I(s,t)$. Further note that $X_I\cD_R^I(s,t)\subseteq 
\cD_R(s,t)$ and
\[ \sigma_{s,t}(xu)=x\sigma_{s,t}^I(u)\quad \mbox{for all $x\in X_I$ and 
$u\in \cD_R^I(s,t)$}.\]
Thus, more intuitively, we can also write $(xu)^*=xu^*$ for $x\in X_I$ and
$u\in \cD_R^I(s,t)$. 
\end{rem}

\begin{cor}[Cf.\ \protect{\cite[3.9, 3.10]{myrel}}] \label{cor1} In the 
setting of Remark~\ref{remapp}, let $\Gamma$ be a left cell of $W_I$ such 
that $\Gamma\subseteq \cD_R^I(s,t)$. Let $\Gamma^*:=\{u^* \mid w\in\Gamma\} 
\subseteq \cD_R^I(s,t)$ (where we use the star operation with respect to 
$W_I,s,t$). By Propositions~\ref{klstar} and \ref{cellind}, each of the 
two sets $X_I\Gamma$, $X_I\Gamma^*$ is a union of left cells of $W$. Now let 
$X_I\Gamma=\Gamma_1\amalg \ldots \amalg \Gamma_r$ be the partition of 
$X_I\Gamma$ into left cells of $W$. Then the star operation (with respect 
to $W,s,t$) is defined for each $\Gamma_i$ and $X_I\Gamma^*= \Gamma_1^* 
\amalg \ldots \amalg \Gamma_r^*$ is the partition of $X_I\Gamma^*$ into 
left cells of $W$. 
\end{cor}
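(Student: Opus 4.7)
The strategy is to apply the bijection $\sigma_{s,t}\colon \cD_R(s,t)\to\cD_R(s,t)$ directly to the decomposition $X_I\Gamma=\Gamma_1\amalg\cdots\amalg\Gamma_r$ and to observe that, thanks to the compatibility formula $(xu)^*=xu^*$ recorded in Remark~\ref{remapp}, this just transports everything to the analogous decomposition of $X_I\Gamma^*$. Since by Remark~\ref{remapp} we have $X_I\Gamma\subseteq X_I\cD_R^I(s,t)\subseteq\cD_R(s,t)$, each left cell $\Gamma_i\subseteq X_I\Gamma$ is automatically contained in $\cD_R(s,t)$, so Proposition~\ref{klstar} applies and $\Gamma_i^*=\{w^*\mid w\in\Gamma_i\}$ is a left cell of $W$ (using the star operation for $W,s,t$).

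Next, I would check that the $r$ sets $\Gamma_1^*,\ldots,\Gamma_r^*$ are pairwise disjoint and their union equals $X_I\Gamma^*$. Disjointness is immediate from the fact that $\sigma_{s,t}$ is an involution (hence a bijection) on $\cD_R(s,t)$, which sends disjoint subsets to disjoint subsets. For the equality of unions, I would apply $\sigma_{s,t}$ to $X_I\Gamma$ and compute using Remark~\ref{remapp}: every element of $X_I\Gamma$ has the form $xu$ with $x\in X_I$ and $u\in\Gamma\subseteq\cD_R^I(s,t)$, and then $\sigma_{s,t}(xu)=xu^*=x\sigma_{s,t}^I(u)$; as $u$ runs through $\Gamma$, $\sigma_{s,t}^I(u)$ runs through $\Gamma^*$, so $\sigma_{s,t}(X_I\Gamma)=X_I\Gamma^*$. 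Combining with $\sigma_{s,t}(X_I\Gamma)=\Gamma_1^*\cup\cdots\cup\Gamma_r^*$ gives the desired equality.

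There is essentially no conceptual obstacle here: everything is an immediate consequence of the key identity $(xu)^*=xu^*$ from Remark~\ref{remapp} together with Propositions~\ref{klstar} and \ref{cellind}. The only point one needs to be mildly careful about is that the star operation on $W$ is a priori an entirely different map from the star operation on $W_I$, and it must be verified (via the product decomposition $w=xu$ with $x\in X_I$, $u\in W_I$ and the equality $\cR(xu)\cap\{s,t\}=\cR(u)\cap\{s,t\}$) that the former restricts, in the appropriate sense, to the latter on elements of $X_I\cD_R^I(s,t)$. This verification is already contained in Remark~\ref{remapp}, so invoking it is what turns the statement into a one-line consequence of Proposition~\ref{klstar}.
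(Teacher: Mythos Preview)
Your proposal is correct and follows essentially the same approach as the paper's own proof: both arguments use the inclusion $X_I\Gamma\subseteq X_I\cD_R^I(s,t)\subseteq\cD_R(s,t)$ together with the identity $(xu)^*=xu^*$ from Remark~\ref{remapp} to see that $\sigma_{s,t}$ restricted to $X_I\Gamma$ is the bijection $xu\mapsto xu^*$ onto $X_I\Gamma^*$, and then invoke Proposition~\ref{klstar} to transport the left-cell decomposition. Your write-up is slightly more explicit about disjointness and the role of $\sigma_{s,t}$ being a bijection, but there is no substantive difference.
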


\begin{proof} Since $X_I\Gamma\subseteq X_I\cD_R^I(s,t)\subseteq 
\cD_R(s,t)$, the star operation (with respect to $W,s,t$) is defined for
each element of $X_I\Gamma$. Now, the bijection $\Gamma
\rightarrow \Gamma^*$, $u\mapsto u^*$, induces a bijection 
\[ X_I\Gamma\rightarrow X_I\Gamma^*,\qquad xu\mapsto xu^* 
\quad (x\in X_I,u\in\Gamma).\]
By Remark~\ref{remapp}, we have $(xu)^*=xu^*$ for $x\in X_I$ and
$u\in \cD_R^I(s,t)$. Thus, the above bijection $X_I\Gamma\rightarrow
X_I\Gamma^*$ is the restriction of $\sigma_{s,t}$ from $\cD_R(s,t)$ to 
$X_I\Gamma$. Consequently, we have $\Gamma_i^*\subseteq X_I\Gamma^*$ for 
$1\leq i\leq r$ which yields the desired assertion. 
\end{proof}


\begin{rem} \label{defrel1} The practical use of Proposition~\ref{cellind} 
is as follows. Let $I\subsetneqq S$ and assume that the left cells of $W_I$ 
have already been determined; let $\fCl(W_I)=\{\Gamma_1,\ldots,\Gamma_r\}$. 
For each $i$, let $X_I\Gamma_i = Y_{i,1}\amalg \ldots \amalg Y_{i,t_i}$ be 
the decomposition into $\tau$-cells. Then the sets $\{ Y_{i,j} \mid 1\leq i 
\leq r,1\leq j\leq t_i\}$ form a partition of $W$; by Corollary~\ref{cortau} 
and Proposition~\ref{cellind}, each $Y_{i,j}$ is a union of left cells of $W$.

Using star operations, we can go one step further. Let $R\subseteq
\{1,\ldots,r\}$ be a subset such that $\fCla(W_I)=\{\Gamma_i\mid i\in R\}$
is a set of left cells as in Remark~\ref{klstar1}. Now let $\Gamma$ be
any left cell of $W$. Then Corollary~\ref{cor1} shows that there
exists a left cell $\Gamma'$ of $W$ which is contained in the set 
\[\Upsilon_I(W):=\bigcup_{i\in R,\,1\leq j\leq t_i} Y_{i,j} \;\subseteq W\]
and such that $\Gamma$ is obtained via repeated applications of star
operations from $\Gamma'$ (with respect to generators $s,t\in I$). Using
repeated applications of star operations with respect to {\em all}
generators in $S$, we obtain a subset $\fCla(W)\subseteq \Upsilon_I(W)$
as in Remark~\ref{klstar1}.
\end{rem}

\section{Type $E_8$} \label{sece8}

We shall now apply the methods developed so far to the group $W$ of 
type $E_8$. Recall that, by Lusztig's results in Section~\ref{secspec}, 
we already know that there are $46$ two-sided cells of $W$, corresponding 
to the special representations in $\cS(W)$. The first step is to consider 
the leading coefficients of character values from Section~\ref{seclead}.

\begin{exmp} \label{twoE8} Let $W$ be of type $E_8$. We have $|\Irr(W)|=112$.
The entries of the matrix of all leading coefficients $(c_{w,E})$ have 
been determined by Lusztig \cite[3.14]{Lu4}. This shows, for example,
that $|c_{w,E}|\leq 8$ for all $E,w$; however, the columns of the matrix 
are not matched with the various elements of $W$. Using \cite[\S 5, 
Algorithm~B]{pycox}, we can determine the matrix $(c_{w,E})$ together
with the labelling of the columns by the elements of~$W$. (See the 
description of the {\sf PyCox} command {\tt distinguishedinvolutions}; note 
that this computation takes nearly $18$ days and requires about $36$~GB of 
main memory.) By inspection of the output, we can verify the following 
statements.
\begin{itemize}
\item[(a)] The set $W^*:=\{w\in W\mid c_{w,E} \neq 0 \mbox{ for some $E\in 
\Irr(W)$}\}$ contains precisely $208422$ elements; furthermore, 
$|\breve{\cD}|= 101796$ and $\breve{n}_d=1$ for all $d\in\breve{\cD}$, 
\item[(b)] Let $\cI:=\{w\in W\mid w^2=1\}$ be the set of involutions in
$W$. This set $\cI$ is computed as explained in \cite[\S 2]{pycox}; we 
have $|\cI|=199952$ and $\breve{\cD}\subseteq \cI\subseteq W^*$. 
\item[(c)] For any $E_0\in\cS(W)$, we set $\cF_{E_0}^*:=\{w\in W\mid 
c_{w,E_0} \neq 0\}$. Then we have 
\[W^*:=\{w\in W\mid c_{w,E} \neq 0 \mbox{ for some $E\in \Irr(W)$}\}
=\bigcup_{E_0\in \cS(W)} \cF_{E_0}^*,\]
where the union on the right hand side is disjoint. 
\end{itemize}
Thus, for any $w\in W^*$, there is a unique $E_0\in\cS(W)$ such that
$c_{w,E_0}\neq 0$. Then we have $w\in\cF_{E_0}$ and so $\ba(w)=\ba_{E_0}$
(see Definition~\ref{aval}).
\end{exmp}

\begin{rem} \label{twoE8g} Most of the statements in Example~\ref{twoE8}
can also be deduced from theoretical arguments and, hence, hold in general.
(But we still need to know the sets $\cI$, $\breve{\cD}$ explicitly in type 
$E_8$ in the subsequent discussion.) For example, if $W$ is a finite Weyl 
group, then (c) holds by \cite[Prop.~7.1]{LuBook} and \cite[3.14]{Lu4}. If
$W$ is of type $H_3$, $H_4$ or $I_2(m)$, then (c) is checked by explicit 
computation; see \cite[Rem.~5.12]{pycox}. The inclusion $\cI\subseteq W^*$ 
in (b) holds by the remark just after \cite[3.5(b)]{Lu4}. The argument for
proving this works in general, once the positivity properties already 
mentioned in the comments on the proof of Theorem~\ref{canrep} are known to
hold.  Similarly, as observed in \cite[3.7]{p115}, these positivity 
properties together with the techniques in \cite{Lu4} also imply that 
$\breve{\cD}$ is the set of ''distinguished involutions'' defined by 
Lusztig \cite{Lu2}; in particular, $\breve{\cD}\subseteq \cI$. 
\end{rem}

\begin{exmp} \label{tauE8} Let $W$ be of type $E_8$. The set $\breve{\cD}$ 
is directly available in {\sf PyCox} via the command {\tt distinva}, which 
returns a pair of lists: the first one containing the elements $w\in
\breve{\cD}$, the second one containing the corresponding values $\ba(w)$. 
(This uses pre-stored data from the output of the original computation in 
Example~\ref{twoE8} and, hence, just takes a few seconds.) We now 
decompose $\breve{\cD}$ into $\tau$-cells:
\begin{verbatim}
    >>> W=coxeter("E", 8)
    >>> gt=gentaucells(W, distinva(W)[0]); len(gt)
    81901 
\end{verbatim}
(This computation takes a few days.) Thus, we obtain the partition 
$\breve{\cD}=\breve{\cD}_1 \amalg \ldots \amalg \breve{\cD}_{81901}$ 
where each $\breve{\cD}_i$ is a $\tau$-cell of $\breve{\cD}$. Using the 
knowledge of the values $\ba(w)$ for all $w\in\breve{\cD}$, we can now 
check that the following property holds: 
\begin{equation*}
\mbox{For each $i\in\{1,2,3,\ldots,81901\}$, the function $w\mapsto \ba(w)$ 
is injective on $\breve{\cD}_i$}.\tag{$*$}
\end{equation*}
Since the function $w\mapsto \ba(w)$ is constant on left cells (see
Definition~\ref{aval}) and since elements in the same left cell have the
same generalized $\tau$-invariant (see Corollary~\ref{cortau}), we conclude
that each left cell of $W$ contains a unique element of $\breve{\cD}$;
in particular, there are precisely $101796$ left cells of $W$. This yields
a new proof of the following result:
\end{exmp}

\begin{thm}[Chen \protect{\cite{chene8}}] \label{thm2} Let $W$ be of type 
$E_8$ and $w,w'\in W$. Then $w\sim_L w'$ if and only if $\ba(w)=\ba(w')$ 
(see Definition~\ref{aval}) and $w,w'$ have the same generalized 
$\tau$-invariant (see Definition~\ref{deftau}).  
\end{thm}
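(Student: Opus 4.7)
The plan is to derive the theorem almost directly from the computational statements already packaged in Example~\ref{tauE8}, specifically property~$(*)$. The ``only if'' direction is immediate: if $w\sim_L w'$, then Definition~\ref{aval} together with Theorem~\ref{canrep} gives $\ba(w)=\ba(w')$, and Corollary~\ref{cortau} says that $w,w'$ have the same generalized $\tau$-invariant. So the whole content of the theorem lies in the reverse implication.

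For the reverse direction, the main idea is to reduce everything to the distinguished set $\breve{\cD}$. By Proposition~\ref{pyc56}(b), every left cell $\Gamma$ of $W$ meets $\breve{\cD}$; choose (for now, any) $d_\Gamma\in\Gamma\cap\breve{\cD}$. For an arbitrary $w\in W$ lying in the left cell $\Gamma$, write $d(w):=d_\Gamma\in\breve{\cD}$. Since $w$ and $d(w)$ are in the same left cell, Definition~\ref{aval} and Corollary~\ref{cortau} give $\ba(w)=\ba(d(w))$ and $d(w)\approx_n w$ for all $n\geq 0$.

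Now suppose $w,w'\in W$ satisfy $\ba(w)=\ba(w')$ and have the same generalized $\tau$-invariant. Then the corresponding elements $d(w),d(w')\in\breve{\cD}$ also satisfy $\ba(d(w))=\ba(d(w'))$ and share the same generalized $\tau$-invariant; hence they lie in a common $\tau$-cell $\breve{\cD}_i$ of the partition $\breve{\cD}=\breve{\cD}_1\amalg\cdots\amalg\breve{\cD}_{81901}$ produced in Example~\ref{tauE8}. Property~$(*)$ asserts that $w\mapsto\ba(w)$ is injective on each $\breve{\cD}_i$, so $\ba(d(w))=\ba(d(w'))$ forces $d(w)=d(w')$. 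Consequently $w$ and $w'$ lie in the same left cell, namely the one containing this common element of $\breve{\cD}$. As a by-product, each left cell contains a unique element of $\breve{\cD}$, recovering the count $|\fCl(W)|=|\breve{\cD}|=101796$.

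The essential and only nontrivial step is therefore verifying property~$(*)$ from Example~\ref{tauE8}. This is the main obstacle and it is genuinely computational: one needs the full list $\breve{\cD}$ together with the values $\ba(d)$ for each $d\in\breve{\cD}$ (coming from the leading coefficient computation of Example~\ref{twoE8}), one has to compute the $\tau$-cell partition of $\breve{\cD}$ via the inductive procedure in Definition~\ref{deftau}, and then one checks, $\tau$-cell by $\tau$-cell, that no $\ba$-value is repeated. The preceding examples describe exactly how {\sf PyCox} carries this out, so no further argument beyond invoking $(*)$ is required.
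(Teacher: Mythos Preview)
Your proof is correct and follows essentially the same route as the paper: both directions are handled identically, with the ``if'' part reduced to $\breve{\cD}$ via Proposition~\ref{pyc56}(b), the hypotheses transferred to the distinguished representatives using Definition~\ref{aval} and Corollary~\ref{cortau}, and then property~$(*)$ from Example~\ref{tauE8} invoked to force $d(w)=d(w')$. Your additional remarks on the uniqueness of $d\in\Gamma\cap\breve{\cD}$ and on the computational nature of~$(*)$ match what the paper records in Example~\ref{tauE8}.
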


\begin{proof} Let $w,w\in W$. If $w\sim_L w'$, then $w,w'$ belong to the 
same two-sided cell and so $\ba(w)=\ba(w')$; furthermore, $w,w'$ have the
same generalized $\tau$-invariant by Corollary~\ref{cortau}. Conversely,
assume that $\ba(w)=\ba(w')$ and that $w,w'$ have the same generalized 
$\tau$-invariant. By Proposition~\ref{pyc56}(b), there exist $d,d\in 
\breve{\cD}$ such that $w\sim_L d$ and $w'\sim_L d'$. Then $\ba(d)=\ba(w)$
and $\ba(w')=\ba(d')$ (by Definition~\ref{aval}) and so $\ba(d)=\ba(d')$.
Furthermore, by Corollary~\ref{cortau}, the elements $d,d'$ have the same 
generalized $\tau$-invariant and so $d,d'\in\breve{\cD}_i$ for some~$i$. 
Hence, Example~\ref{tauE8}($*$) implies that $d=d'$ and so $w\sim_L d
\sim_L w'$. 
\end{proof}

\begin{cor} \label{twoE8a} Let $W$ be of type $E_8$ and $w,w'\in W^*$,
where $W^*$ is defined in Example~\ref{twoE8}. Then $w\sim_{L} w'$ if and
only if $w,w'$ have the same generalized $\tau$-invariant and if there
exists some $E_0\in\cS(W)$ such that $c_{w,E_0}\neq 0$ and $c_{w',E_0}
\neq 0$.
\end{cor}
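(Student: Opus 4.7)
The plan is to derive the corollary as a direct packaging of Theorem~\ref{thm2} together with the disjointness result in Example~\ref{twoE8}(c) and Corollary~\ref{prop1}. The essence of the argument is that, for elements of $W^*$, the equality $\ba(w)=\ba(w')$ from Theorem~\ref{thm2} can be upgraded to the more concrete statement that a common special representation $E_0\in\cS(W)$ simultaneously witnesses the non-vanishing of the leading coefficients of $w$ and of $w'$.

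First I would do the forward direction. Assume $w\sim_L w'$. Then $w,w'$ have the same generalized $\tau$-invariant by Corollary~\ref{cortau}, and they lie in a common two-sided cell, so $\ba(w)=\ba(w')$. Since $w\in W^*$, Example~\ref{twoE8}(c) produces a (unique) $E_0\in\cS(W)$ with $c_{w,E_0}\neq 0$; by Corollary~\ref{prop1} this forces $w\in\cF_{E_0}$, and hence $w'\in\cF_{E_0}$ as well. Now, applying Example~\ref{twoE8}(c) to $w'\in W^*$, there exists $E_0'\in\cS(W)$ with $c_{w',E_0'}\neq 0$; Corollary~\ref{prop1} gives $w'\in\cF_{E_0'}$, so by disjointness of the two-sided cells we get $E_0'=E_0$, proving $c_{w',E_0}\neq 0$.

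For the reverse direction, I would assume that $w,w'$ share the same generalized $\tau$-invariant and that some $E_0\in\cS(W)$ satisfies $c_{w,E_0}\neq 0$ and $c_{w',E_0}\neq 0$. Corollary~\ref{prop1} immediately places both $w$ and $w'$ in $\cF_{E_0}$, whence $\ba(w)=\ba_{E_0}=\ba(w')$ by Definition~\ref{aval}. At this point Theorem~\ref{thm2} applies directly and yields $w\sim_L w'$.

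There is essentially no obstacle: the content of the corollary is entirely contained in Theorem~\ref{thm2} and in the disjoint decomposition $W^*=\bigsqcup_{E_0\in\cS(W)}\cF^*_{E_0}$ recorded in Example~\ref{twoE8}(c). The only point one needs to be a little careful about is that $\cS(W)$-labelled sets $\cF^*_{E_0}$ really are disjoint (not just that each $w\in W^*$ has \emph{some} special representation attached), which is why Example~\ref{twoE8}(c) is stated with a disjoint union and why it forces $E_0'=E_0$ in the forward direction above.
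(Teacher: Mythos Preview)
Your proof is correct and follows essentially the same route as the paper's: in both directions you reduce to Theorem~\ref{thm2} via the fact that $c_{w,E_0}\neq 0$ forces $w\in\cF_{E_0}$, and in the forward direction you use the bijection between $\cS(W)$ and two-sided cells (Theorem~\ref{canrep}) to conclude $E_0=E_0'$. The only cosmetic difference is that the paper cites Theorem~\ref{canrep} directly for the uniqueness of $E_0$, whereas you phrase it as disjointness of two-sided cells together with Example~\ref{twoE8}(c).
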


\begin{proof} Let $w,w'\in W^*$. If $w\sim_L w'$, then $w,w'$ have
the same generalized $\tau$-invariant. Furthermore, by
Example~\ref{twoE8}, there exist $E_0,E_0'\in\cS(W)$ such that
$c_{w,E_0}\neq 0$ and $c_{w',E_0'}\neq 0$. Then $w\in\cF_{E_0}$ and
$w'\in \cF_{E_0'}$. Since $w\sim_L w'$, we conclude that 
$\cF_{E_0}=\cF_{E_0'}$ and so $E_0=E_0'$, using Theorem~\ref{canrep}.
Conversely, assume that $w,w'$ have the same generalized $\tau$-invariant
and that there exists some $E_0\in\cS(W)$ such that $c_{w,E_0}\neq 0$ and 
$c_{w',E_0}\neq 0$. Then $w,w'\in\cF_{E_0}$ and so $\ba(w)=\ba(w')$.
Hence, we have $w\sim_L w'$ by Theorem~\ref{thm2}.
\end{proof}

\begin{exmp} \label{allE8} Let $W$ be of type $E_8$. Because of the sheer 
size of $W$, it would be impossible to determine directly all values
$\ba(w)$ for $w\in W$ and the partition of $W$ into $\tau$-cells. Instead 
we apply the procedure in Remark~\ref{defrel1} in order to obtain a 
set $\Upsilon_I(W)$ and then a set of left cells $\fCla(W) \subseteq
\Upsilon_I(W)$ as in Remark~\ref{klstar1}. For this purpose, let $I$ be 
such that $W_I$ is of type $E_7$. The set $X_I$ of coset representatives 
is obtained by the {\sf PyCox} command {\tt redleftcosetreps}; we have 
$|X_I|=240$. The {\sf PyCox} command 
{\tt klcells} shows that $|\fCl(W_I)|=6364$ and $|\fCla(W_I)|=56$ (see 
\cite[Table~2, p.~246]{pycox}); furthermore, the largest left cell of 
$W_I$ has size $1024$. Hence, we will have to deal with $56$ sets 
$X_I\Gamma_i$ each of which contains at most $240\cdot 1024$ elements. 
For sets of this size, the partition into $\tau$-cells is readily 
determined using the {\tt gentaucells} command of {\sf PyCox}. Explicitly, 
we find that $|\Upsilon_I(W)|=4305120$ and that $\Upsilon_I(W)$ decomposes 
into $614$ $\tau$-cells. Now let $\cC\subseteq \Upsilon_I(W)$ be such a
$\tau$-cell. The decomposition of $\cC$ into left cells is determined as
follows.
\begin{itemize}
\item By Example~\ref{tauE8}, $|\cC\cap \breve{\cD}|$ is the number
of left cells contained in $\cC$. Hence, if $|\cC\cap \breve{\cD}|=1$,
then $\cC$ is a left cell. This applies to $522$ of the $\tau$-cells
in $\Upsilon_I(W)$.
\end{itemize}
Now assume that $|\cC\cap\breve{\cD}|>1$. Let $A:=\{\ba(w) \mid w\in \cC\}$ 
and set $\cC_{(i)}:=\{w\in\cC\mid \ba(w)=i\}$ for $i\in A$. By 
Theorem~\ref{thm2}, each set $\cC_{(i)}$ is a left cell of $W$. Thus, if we 
can determine $\ba(w)$ for all $w\in \cC$, then we can decompose 
$\cC$ into left cells. By Remark~\ref{leftkl}, $\cC$ is a union of left star
orbits. So it suffices to determine $\ba(w)$ for one element $w$ in each 
such orbit. If the orbit contains an involution~$w$, then we obtain 
$\ba(w)$ from the results in Example~\ref{twoE8}. It turns out that, 
inside the $92$ $\tau$-cells $\cC$ such that $|\cC\cap\breve{\cD}|>1$, there 
are $561$ left star orbits which do not contain an involution. For 
representatives of these $561$ orbits, we then have to rely on the techniques 
described in Remark~\ref{remHY} in order to determine $\ba(w)$. (These 
computations take a few days.) Eventually, we find that $\Upsilon_I(W)$ 
decomposes into $746$ left cells. Out of these $746$, we further obtain a 
set $\fCla(W)$ consisting of $106$ left cells. 
\end{exmp}

\begin{rem} \label{sprepelm} 
In {\sf PyCox}, the left cells in $\fCla(W)$ are available via the command 
{\tt klcellreps}, which also returns additional information, e.g., the 
decomposition of $[\Gamma]_1$ into irreducibles and the unique $E_0\in 
\cS(W)$ such that $\Gamma\subseteq\cF_{E_0}$, for any $\Gamma\in\fCla(W)$. 
The star orbit of a left cell in $\fCla(W)$ is computed by the {\sf PyCox}
command {\tt cellrepstarorbit}. In type $E_8$, it takes about $32$ hours 
and a computer with $128$~GB of main memory to produce the complete list 
of all $101796$ left cells of $W$ out of the $106$ left cells in $\fCla(W)$. 
However, just using the set $\fCla(W)$ alone, we now obtain an efficient 
algorithm for determining, for {\em any} $w\in W$, the unique $E_0\in\cS(W)$ 
such that $w\in\cF_{E_0}$. Indeed, given $w$, we compute $R^*(w)$ as defined 
in Remark~\ref{leftkl}. By the definition of $\fCla(W)$, there exists some 
$\Gamma\in\fCla(W)$ such that $\Gamma\cap R^*(w)\neq \varnothing$; let 
$y\in\Gamma\cap R^*(w)$. Since $R^*(w)$ is contained in
a right cell, we have $w\sim_{LR}y$. The unique $E_0\in\cS(W)$ such that
$y\in\cF_{E_0}$ is already known (since $y\in \Gamma$ and $\Gamma\in 
\fCla(W)$); hence, $w\in\cF_{E_0}$ and $\ba(w)=\ba(y)=\ba_{E_0}$. This 
procedure is implemented in the {\sf Python} command {\tt klcellrepelm}. 
(And this function does not require much main memory on a computer.)
By a slight modification of this procedure, one can construct the 
left cell containing any given element $w\in W$; this is implemented
in the {\sf PyCox} command {\tt leftcellelm}. (See the help menues of
these functions for further information.)
\end{rem}

\begin{rem} \label{string} The results of Garfinkle \cite{gar3} and the
fact that an equivalence like that in Theorem~\ref{thm2} even holds in 
type $E_8$ suggest that something similar should be true for the left cells 
in any finite Coxeter group $W$. The idea would be to strengthen the 
definition of the generalized $\tau$-invariant in Definition~\ref{deftau}
using Lusztig's method of ''strings'' \cite[\S 10]{Lu1}. To explain how 
this works, let us consider again two 
generators $s,t\in S$ but drop the assumption that $st$ has order~$3$. 
Assume that $st$ has order $m\geq 3$. Let $W'\subseteq W$ be the parabolic 
subgroup generated by $s,t$. For any $w\in W$, the coset $W'w$ can be 
partitioned into four subsets: one consists of the unique element $x$ 
of minimal length, one consists of the unique element of maximal length, 
one consists of the $(m-1)$ elements $sx,tsx,stsx,\ldots$ and one consists 
of the $(m-1)$ elements $tx,stx, tstx,\ldots$. As in \cite[10.2]{Lu1}, the 
last two subsets (ordered as above) are called {\em strings}. 
By \cite[10.6]{Lu1}, we can now define an involution
\[ \cD_R(s,t)\rightarrow \cD_R(s,t),\qquad w\mapsto \tilde{w},\]
as follows. Let $w\in\cD_R(s,t)$. Then $w^{-1}$ is contained in a unique 
string $\sigma_{w^{-1}}$ (with respect to $s,t$). Let $i\in\{1,\ldots,
m-1\}$ be the index such that $w^{-1}$ is the $i$th element of 
$\sigma_{w^{-1}}$. Then $\tilde{w}$ is defined to be the element such 
that $\tilde{w}^{-1}$ is the $(m-i)$th element of $\sigma_{w^{-1}}$. 

Now let $\Gamma\subseteq \cD_R(s,t)$ be a left cell. 
Then $\tilde{\Gamma}=\{\tilde{w} \mid w\in \Gamma\}$ also is a left cell 
by \cite[Prop.~10.7]{Lu1} (the assumption on $W$ being ''crystallographic''
is now superfluous, thanks to Elias--Williamson \cite{EW}). Hence, as in
Section~\ref{sec2}, we do not only have $\cR(y)=\cR(w)$ but also 
$\cR(\tilde{y})= \cR(\tilde{w})$ for all $y,w\in \Gamma$. Iterating this
process (exactly as in Definition~\ref{deftau} but now allowing any 
generators $s,t$ such that $st$ has order at least $3$), we obtain a stronger 
version of the generalised $\tau$-invariant; the corresponding equivalence 
classes of $W$ are called {\em $\tilde{\tau}$-cells}. We can now state 
the following variation of Vogan's conjecture \cite[3.11]{voga}.
\end{rem}

\begin{conj} \label{sigmatau} For any finite Coxeter group $W$,
two elements $w,w'\in W$ belong to the same left cell if and only if 
$\ba(w)=\ba(w')$ and $w,w'$ belong to the same $\tilde{\tau}$-cell. 
\end{conj}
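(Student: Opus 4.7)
The plan is to reduce Conjecture~\ref{sigmatau} to a statement purely about the set $\breve{\cD}$ of distinguished involutions, and then to settle this statement case by case through the classification of finite Coxeter groups. The forward implication is essentially contained in the machinery already built up. If $w\sim_L w'$, then $w\sim_{LR}w'$ and so $\ba(w)=\ba(w')$ by Definition~\ref{aval}; the assertion that $w,w'$ lie in the same $\tilde{\tau}$-cell is the extension of Corollary~\ref{cortau} to Lusztig's strings, which follows from \cite[Prop.~10.7]{Lu1} once the ''crystallographic'' hypothesis is removed using the positivity results of Elias--Williamson \cite{EW}, as noted in Remark~\ref{string}.

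For the converse, I would mimic the proof of Theorem~\ref{thm2}. Given $w,w'\in W$ with $\ba(w)=\ba(w')$ and $w,w'$ in the same $\tilde{\tau}$-cell, Proposition~\ref{pyc56}(b) produces $d,d'\in\breve{\cD}$ with $w\sim_L d$ and $w'\sim_L d'$. By the forward direction just established, $\ba(d)=\ba(d')$ and $d,d'$ belong to the same $\tilde{\tau}$-cell. Hence it suffices to prove the following analogue of Example~\ref{tauE8}($*$):
\begin{equation*}
(\star)\qquad \text{For each $\tilde{\tau}$-cell } \breve{\cD}_i \text{ of } \breve{\cD}, \text{ the function } \ba \text{ is injective on } \breve{\cD}_i.
\end{equation*}
If $(\star)$ holds, then $d=d'$ and hence $w\sim_L d=d'\sim_L w'$.

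The verification of $(\star)$ would then split along the classification. For the types $A_n$, $I_2(m)$, $H_3$ and $E_6$, where already the weaker $\tau$-invariant separates left cells, there is nothing to do. For the remaining exceptional types $F_4$, $E_7$, $H_4$ and $E_8$, I would carry out the same {\sf PyCox}-based strategy used in Example~\ref{tauE8}: compute $\breve{\cD}$ together with the values $\ba(d)$ via the methods of Section~\ref{seclead}, decompose $\breve{\cD}$ into $\tilde{\tau}$-cells (replacing {\tt gentaucells} by a variant incorporating strings, which is only needed for $F_4$), and check injectivity of $\ba$ on each piece. For $E_8$ in particular, Example~\ref{tauE8} already establishes $(\star)$ with the coarser $\tau$-invariant, so refining to $\tilde{\tau}$ can only preserve it. The remaining cases are the infinite families $B_n$ and $D_n$, where no finite computation suffices and one must argue structurally, using Garfinkle's description \cite{gar3} of the left cells in terms of pairs of standard domino tableaux. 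The task there reduces to showing that two such bitableaux of the same shape (equivalently, the same $\ba$-value, via the Lusztig--Spaltenstein parametrisation of special representations) lie in the same Garfinkle orbit precisely when the underlying involutions have the same $\tilde{\tau}$-invariant.

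The main obstacle is this last classical case. Garfinkle's equivalence is generated by moving dominos through ''cycles'', and one must verify that the combinatorial effect of Lusztig's strings on domino tableaux is rich enough to detect every such move once the shape (i.e.\ the $\ba$-value) is fixed. I expect that a patient translation of strings into tableau operations, combined with an induction on $n$ and on the shape, will carry this through, but the combinatorial bookkeeping is likely to be delicate, and it is conceivable that a further refinement of $\tilde{\tau}$ (in the spirit of Garfinkle's own generalised $\tau$-invariants) is actually needed in pathological shapes.
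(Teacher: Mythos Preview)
The statement you are addressing is a \emph{conjecture} in the paper, not a theorem: the paper contains no proof of it. What the paper does is exactly the computational part of your proposal. Using the same reduction as in the proof of Theorem~\ref{thm2} (your property $(\star)$ on $\breve{\cD}$), the authors verify Conjecture~\ref{sigmatau} for all exceptional types $H_3$, $H_4$, $F_4$, $E_6$, $E_7$, $E_8$ and for the classical types $B_n$, $D_n$ with $n\leq 9$; they also note that in type $F_4$ the left cells coincide with the $\tilde{\tau}$-cells. Beyond these finite checks the conjecture is left open.

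Your forward implication and your reduction of the converse to $(\star)$ are correct and match the paper's argument exactly. The gap in your proposal is precisely where you locate it yourself: the infinite families $B_n$ and $D_n$. The paper makes no claim that Garfinkle's cycle-moves can be matched by Lusztig's strings, and your own closing sentence concedes that a further refinement of $\tilde{\tau}$ might be needed. That is not a proof; it is a research programme. So your proposal does not go beyond the paper's partial evidence, and the classical-type step remains a genuine open problem rather than ``delicate bookkeeping''.
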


Using the same argument as in the proof of Theorem~\ref{thm2} (based on a 
computation as in Example~\ref{tauE8}), we have checked that 
Conjecture~\ref{sigmatau} is true for all classical types $B_n,D_n$ where 
$n\leq 9$, and for all exceptional types including the non-crystallographic 
types $H_3$, $H_4$; in type $F_4$, the left cells are precisely the 
$\tilde{\tau}$-cells. It is even possible to formulate of version of 
Conjecture~\ref{sigmatau} for cells with respect to unequal
parameters; see \cite{prep14}.

\section{Applications} \label{secappl}

Using the results in the previous section, it should now be possible
to answer any concrete question concerning the partition into 
Kazhdan--Lusztig cells for $W$ of type $E_8$.

\begin{exmp} \label{conjE8} This arises from the work of Lusztig 
\cite{LuRat} on rationality properties of unipotent representations.
Let $W$ be of type $E_8$ and $C_0$ be the unique conjugacy class of $W$
whose elements have order $6$ and $|C_0|=4480$. Quite remarkably, we have 
$l(w)=40$ for all $w\in W$ in this case; see \cite[Appendix B.6]{gepf}. 
Lusztig \cite[2.17]{LuRat} observed that
\begin{itemize}
\item $|C_0|=4480$ is equal to the number of left cells in the two-sided
cell $\cF$ of $W$ attached to $C_0$ by the method described in 
\cite[2.17]{LuRat}.
\end{itemize}

\begin{landscape}
\begin{table}[ph!]
\caption{Intersections of two-sided cells with $C_{\text{min}}$ for 
cuspidal classes in type $E_8$} \label{tabinter}
{\scriptsize $\renewcommand{\arraystretch}{0.9} \arraycolsep 2pt
\begin{array}{cccc} 
\hline C & o(w) & |C_{\text{min}}| & |C_{\text{min}} \cap \cF_{E_0}| 
\text{ for $E_0\in \cS(W)$} 
\\\hline 8A_1 & 2 & 1 & 1{*}1_x' \\
2D_4(a_1) & 4 & 15120 & 2100{*}2100_y{\cup} 13020{*}4480_y \\
D_4{+}4A_1 & 6 & 56 & 14{*}567_x'{\cup} 2{*}112_z'{\cup} 40{*}1400_z' \\
4A_2 & 3 & 4480 & 4480{*}4480_y \\
E_8(a_8) & 6 & 4480 & 4480{*}4480_y \\
E_7(a_4){+}A_1 & 6 & 11592 & 174{*}2240_x{\cup} 2944{*}4480_y{\cup} 
128{*}2835_x'{\cup} 1760{*}4200_x{\cup} 1408{*}6075_x{\cup} 
80{*}6075_x'{\cup} 780{*}2800_z{\cup} 786{*}4096_z \\&&& {\cup}
480{*}4200_z{\cup} 590{*}4200_z'{\cup} 348{*}4536_z{\cup} 142{*}4536_z'{\cup} 
1098{*}5600_z{\cup} 874{*}5600_z' \\
2D_4 & 6 & 4070 & 42{*}2100_y{\cup} 14{*}2240_x{\cup} 52{*}2240_x'{\cup} 
1814{*}4480_y{\cup} 64{*}2268_x'{\cup} 270{*}2835_x{\cup} 24{*}2835_x'{\cup} 
206{*}4200_x{\cup} 96{*}4200_x'\\&&&{\cup} 418{*}6075_x{\cup} 
38{*}6075_x'{\cup} 82{*}2800_z{\cup} 222{*}4096_z{\cup} 228{*}4200_z'{\cup} 
106{*}4536_z{\cup} 12{*}5600_z{\cup} 382{*}5600_z'\\
2A_3{+}2A_1 & 4 & 1260 & 208{*}2268_x'{\cup} 466{*}4096_x'{\cup} 
258{*}6075_x'{\cup} 108{*}1400_z'{\cup} 114{*}4536_z'{\cup} 
106{*}5600_z'\\
D_8(a_3) & 8 & 7748 & 14{*}210_x{\cup} 168{*}525_x{\cup} 236{*}567_x{\cup} 
366{*}700_x{\cup} 1614{*}1400_x{\cup} 452{*}2240_x{\cup} 404{*}2268_x
\\&&& {\cup}376{*}4200_x{\cup} 44{*}6075_x{\cup} 92{*}560_z{\cup} 
1908{*}1400_z{\cup} 20{*}2800_z{\cup} 1502{*}3240_z{\cup} 552{*}4096_z\\
D_6{+}2A_1 & 10 & 256 & 102{*}4480_y{\cup} 4{*}2268_x'{\cup} 26{*}4200_x{\cup} 
42{*}6075_x{\cup} 26{*}5600_z{\cup} 56{*}5600_z' \\
2A_4 & 5 & 7952 & 38{*}567_x{\cup} 134{*}1400_x{\cup} 1058{*}2240_x{\cup} 
440{*}4480_y{\cup} 272{*}2268_x{\cup} 668{*}2835_x{\cup} 2242{*}4200_x
\\&&& {\cup}200{*}6075_x{\cup} 480{*}1400_z{\cup} 172{*}2800_z{\cup} 
856{*}3240_z{\cup} 1344{*}4096_z{\cup} 48{*}4536_z \\
E_8(a_6) & 10 & 3370 & 12{*}210_x{\cup} 56{*}567_x{\cup} 198{*}700_x{\cup} 
734{*}1400_x{\cup} 348{*}2240_x{\cup} 42{*}2268_x\\&&&{\cup}
146{*}4200_x{\cup} 122{*}560_z{\cup} 966{*}1400_z{\cup} 610{*}3240_z{\cup} 
136{*}4096_z \\
E_6(a_2){+}A_2 & 6 & 16374 & 310{*}1400_x{\cup} 1110{*}2240_x{\cup} 
1774{*}4480_y{\cup} 510{*}2268_x{\cup} 4536{*}4200_x{\cup} 
2116{*}6075_x{\cup} 124{*}1400_z\\&&&{\cup} 1638{*}2800_z{\cup} 
786{*}3240_z{\cup} 1422{*}4096_z{\cup} 382{*}4200_z{\cup} 626{*}4536_z{\cup} 
1040{*}5600_z \\
E_8(a_3) & 12 & 2696 & 34{*}210_x{\cup} 186{*}567_x{\cup} 364{*}700_x{\cup} 
604{*}1400_x{\cup} 78{*}2240_x{\cup} 380{*}560_z{\cup} 870{*}1400_z{\cup} 
180{*}3240_z \\
A_5{+}A_2{+}A_1 & 6 & 3752 & 84{*}2240_x{\cup} 1148{*}4480_y{\cup} 
356{*}2835_x{\cup} 364{*}4200_x{\cup} 492{*}6075_x{\cup} 
166{*}2800_z\\&&&{\cup} 308{*}4096_z{\cup} 294{*}4200_z{\cup} 
136{*}4536_z{\cup} 54{*}5600_z{\cup} 350{*}5600_z' \\
D_8(a_1) & 12 & 2040 & 52{*}210_x{\cup} 40{*}525_x{\cup} 36{*}567_x{\cup} 
82{*}700_x{\cup} 400{*}1400_x{\cup} 52{*}2240_x{\cup} 158{*}560_z{\cup} 
688{*}1400_z{\cup} 532{*}3240_z \\
D_8 & 14 & 852 & 24{*}210_x{\cup} 12{*}525_x{\cup} 16{*}567_x{\cup} 
120{*}700_x{\cup} 200{*}1400_x{\cup} 112{*}560_z{\cup} 272{*}1400_z{\cup} 
12{*}2800_z{\cup} 84{*}3240_z \\
A_7{+}A_1 & 8 & 2080 & 202{*}1400_x{\cup} 58{*}2240_x{\cup} 200{*}4480_y{\cup} 
206{*}2268_x{\cup} 208{*}4200_x{\cup} 180{*}6075_x\\&&&{\cup}
240{*}2800_z{\cup} 332{*}3240_z{\cup} 374{*}4096_z{\cup} 80{*}5600_z \\
E_7{+}A_1 & 18 & 192 & 4{*}525_x{\cup} 8{*}567_x{\cup} 8{*}700_x{\cup} 
100{*}1400_x{\cup} 8{*}560_z{\cup} 52{*}1400_z{\cup} 12{*}2800_z \\
A_8 & 9 & 2816 & 6{*}210_x{\cup} 78{*}567_x{\cup} 172{*}700_x{\cup} 
648{*}1400_x{\cup} 478{*}2240_x{\cup} 62{*}560_z{\cup} 790{*}1400_z{\cup} 
582{*}3240_z \\
E_8(a_4) & 18 & 732 & 104{*}210_x{\cup} 24{*}567_x{\cup} 114{*}700_x{\cup} 
80{*}1400_x{\cup} 12{*}112_z{\cup} 278{*}560_z{\cup} 120{*}1400_z \\
E_8(a_2) & 20 & 624 & 12{*}35_x{\cup} 202{*}210_x{\cup} 16{*}567_x{\cup} 
32{*}700_x{\cup} 16{*}1400_x{\cup} 72{*}112_z{\cup} 242{*}560_z{\cup} 
32{*}1400_z \\
D_5(a_1){+}A_3 & 12 & 15134 & 38{*}525_x{\cup} 42{*}567_x{\cup} 
258{*}1400_x{\cup} 1648{*}2240_x{\cup} 1012{*}4480_y{\cup} 
620{*}2268_x{\cup} 1144{*}2835_x{\cup} 3082{*}4200_x\\&&&{\cup} 
1318{*}6075_x{\cup} 620{*}1400_z{\cup} 1086{*}2800_z{\cup} 
1410{*}3240_z{\cup} 2388{*}4096_z{\cup} 200{*}4536_z{\cup} 
268{*}5600_z \\
E_6{+}A_2 & 12 & 840 & 32{*}567_x{\cup} 28{*}700_x{\cup} 524{*}1400_x{\cup} 
256{*}1400_z \\
E_8(a_7) & 12 & 2360 & 2{*}567_x{\cup} 40{*}700_x{\cup} 1136{*}1400_x{\cup} 
92{*}2240_x{\cup} 188{*}2268_x{\cup} 32{*}4200_x{\cup} 50{*}560_z{\cup} 
536{*}1400_z{\cup} 64{*}3240_z{\cup} 220{*}4096_z \\
E_7(a_2){+}A_1 & 12 & 1758 & 6{*}525_x{\cup} 10{*}567_x{\cup} 4{*}700_x{\cup} 
492{*}1400_x{\cup} 32{*}4480_y{\cup} 84{*}2268_x{\cup} 40{*}4200_x{\cup} 
106{*}6075_x\\&&&{\cup}10{*}560_z{\cup} 302{*}1400_z{\cup} 196{*}2800_z{\cup} 
140{*}3240_z{\cup} 304{*}4096_z{\cup} 32{*}5600_z \\
E_8(a_1) & 24 & 320 & 12{*}35_x{\cup} 114{*}210_x{\cup} 16{*}700_x{\cup} 
90{*}112_z{\cup} 88{*}560_z \\
D_8(a_2) & 30 & 4996 & 24{*}210_x{\cup} 60{*}525_x{\cup} 32{*}567_x{\cup} 
292{*}700_x{\cup} 840{*}1400_x{\cup} 468{*}2240_x{\cup} 180{*}2268_x{\cup} 
38{*}4200_x{\cup} 34{*}6075_x\\&&&{\cup} 104{*}560_z{\cup} 976{*}1400_z{\cup} 
224{*}2800_z{\cup} 1024{*}3240_z{\cup} 528{*}4096_z{\cup} 44{*}4200_z{\cup} 
128{*}4536_z \\
E_8(a_5) & 15 & 1516 & 2{*}35_x{\cup} 174{*}210_x{\cup} 28{*}567_x{\cup} 
368{*}700_x{\cup} 194{*}1400_x{\cup} 20{*}2240_x{\cup} 44{*}112_z{\cup} 
338{*}560_z{\cup} 336{*}1400_z{\cup} 12{*}3240_z \\
E_8 & 30 & 128 & 14{*}35_x{\cup} 48{*}210_x{\cup} 66{*}112_z \\\hline
\end{array}$}
\end{table}
\end{landscape}

He remarks that ''this suggests that $C_0\subseteq \cF$ and that any left 
cell in $\cF$ contains a unique element of $C_0$''. Using {\sf PyCox}, we 
can confirm that this suggestion is true, as follows.
First, we identify our class $C_0$ in the list of the $112$ classes 
returned by the command {\tt conjugacyclasses}.
\begin{verbatim}
    >>> W=coxeter("E", 8)
    >>> c=conjugacyclasses(W)
    >>> [i for i in range(112) if c['classlengths'][i]==4480 and 
                        W.permorder(W.wordtoperm(c['reps'][i]))==6]
    [10]
    >>> cl=conjugacyclass(W, W.wordtoperm(c['reps'][10]))
    # Size of class: 4480
    >>> set([W.permlength(w) for w in cl])
    40           
\end{verbatim}
The last command shows that all elements in $C_0$ indeed have length $40$. 
Next, we check how $C_0$ is partitioned into $\tau$-cells:
\begin{verbatim}
    >>> len(gentaucells(W, cl))  # This will take almost an hour.
    4480
\end{verbatim}
Thus, all the elements lie in pairwise different $\tau$-cells
and, hence, in pairwise different left cells of $W$. Finally, we check that
all elements of $C_0$ lie in the same two-sided cell.
\begin{verbatim}
    >>> klcellrepelm(W,cl[0])['special']      # see Remark 6.7
    '4480_y'                  
    >>> set([klcellrepelm(W,w)['special'] for w in cl])
    set(['4480_y'])
\end{verbatim}
(This takes about a quarter of an hour.) Thus, we have $C_0\subseteq 
\cF_{4480_y}$. By Table~\ref{fame8}, we also obtain $\ba(w)=16$ for all
$w\in C_0$.

More generally, let $C$ be any conjugacy class of $W$. Let $d_C=
\min\{l(w)\mid w\in C\}$ and $C_{\text{min}}=\{w\in C\mid l(w)=d_C\}$ be
the set of elements of minimal length in $C$; see \cite[\S 3.1]{gepf}. 
Furthermore, we say that $C$ is cuspidal if $C\cap W_I=\varnothing$ for
any proper $I\subsetneqq S$. Thus, the class $C_0$ considered above is
a cuspidal class such that $C_0=C_{0,\text{min}}$. Table~\ref{tabinter} 
shows the cardinalities of the intersections $C_{\text{min}}\cap \cF_{E_0}$
as $E_0$ runs over the set $\cS(W)$ of special representations.
(The notation $n_1*E_1\cup n_2*E_2\cup\ldots$ means $|C_{\text{min}}\cap
\cF_{E_i}|=n_i$ for $i=1,2,\ldots$. The above example $C_0$ corresponds 
to the $5$th row of the table.)
\end{exmp}

Next, we discuss the following conjecture.

\begin{conj}[Lusztig, cf.\ \protect{\cite{shi1}, \cite{xi1}}] \label{conn0} 
Every left cell $\Gamma$ of $W$ is left-connected, that is, for any two 
elements $x,y \in \Gamma$, there is a chain of generators $s_1,s_2,\ldots,
s_n$ in $S$ such that $y=s_n \cdots s_2s_1x$ and all intermediate elements 
$s_1x$, $s_2s_1x$, $\ldots$, $s_{n-1}\cdots s_2s_1x$ lie in $\Gamma$.
\end{conj}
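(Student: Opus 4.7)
The plan is to verify Conjecture~\ref{conn0} computationally, using the machinery developed in Sections~\ref{sec2} and~\ref{sece8}. Given a left cell $\Gamma$, left-connectedness is equivalent to connectedness of the undirected graph $G_\Gamma$ on vertex set $\Gamma$ with edges $\{x,sx\}$ for $s\in S$ whenever $sx\in\Gamma$; this is verified by a breadth-first search from any chosen element $x_0\in\Gamma$. For the small-rank cases---$W$ of type $I_2(m)$, $H_3$, $H_4$, $F_4$, $E_6$, $E_7$, and $B_n,D_n$ for small $n$---the complete list of left cells is directly available via the \texttt{klcells} command of Remark~\ref{rem1}, and the search is performed cell by cell without further preparation.

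The critical case is $W$ of type $E_8$, where enumerating all $101796$ left cells naively is expensive. The first step I would attempt is to reduce the problem to the $106$ representatives in $\fCla(W)$ by proving that left-connectedness is preserved under the star operation $\Gamma\mapsto\Gamma^*$ of Proposition~\ref{klstar}. This is the main obstacle: the star operation acts on the right (sending $w$ to $ws$ or $wt$), whereas left-connectedness concerns left multiplication, so the two structures do not commute transparently. Concretely, one would need to show that each elementary left step $y=sx$ within $\Gamma$ can be replaced by a bounded sequence of left steps connecting $x^*$ to $y^*$ within $\Gamma^*$. A case analysis on the descent sets $\cR(x),\cR(y)$ and on the relative positions of $x,y,x^*,y^*$ in the cosets of the parabolic subgroup $\langle s,t\rangle$ should settle this, but the verification is delicate and must be carried out for each of the several configurations that can arise.

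If the star reduction is established, the verification for $E_8$ reduces to running BFS on $106$ cells, which is routine. As a fall-back, one can generate all $101796$ left cells directly from $\fCla(W)$ using the \texttt{cellrepstarorbit} command, streaming the cells one at a time so that only one of them resides in memory at any moment; with typical cell sizes in the thousands and elements stored in a hash table for $O(1)$ membership tests, each individual BFS is cheap, and the total cost is comparable to that of the computation described in Example~\ref{allE8}. For all other finite Coxeter groups, the analogous verification via \texttt{klcells} and \texttt{leftcellelm} is essentially immediate. A purely theoretical proof---rather than a verification---would presumably require combining the star-operation reduction above with an induction of cells argument in the spirit of Proposition~\ref{cellind}, using left-connectedness of the cells of a maximal proper parabolic $W_I$ as an inductive hypothesis; the difficulty there is again the interaction between the right-sided nature of these tools and the left-sided assertion.
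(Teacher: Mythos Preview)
Your fallback approach for type $E_8$---streaming all $101796$ left cells from $\fCla(W)$ via \texttt{cellrepstarorbit} and checking connectedness of each by BFS, one cell at a time so that memory stays bounded---is exactly what the paper does in Example~\ref{conn1}; it reports that this takes two or three days. The paper makes no attempt at the star-operation reduction you propose as a first step: it does not try to show that left-connectedness is preserved under $\Gamma\mapsto\Gamma^*$, and simply accepts the cost of running through all cells. So your main effort (the compatibility of right star operations with left connectedness) is extra work that the paper circumvents, and your fallback is the paper's actual argument.

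There is, however, a scope issue in your final paragraph. You write that for ``all other finite Coxeter groups'' the verification via \texttt{klcells} is essentially immediate, but there are infinitely many groups of each classical type and one cannot verify them all by computation. The paper handles these by citation rather than computation: type $A_n$ follows from \cite[\S 5]{KL}, type $I_2(m)$ from the explicit description of cells in \cite[7.15]{Lusztig03}, and type $B_n$ from Garfinkle \cite[Theorem~3.5.9]{gar3} via the dictionary with primitive ideals. Crucially, the paper records that the conjecture remains \emph{open} for type $D_n$ in general. So no proof of Conjecture~\ref{conn0} is given or claimed; Example~\ref{conn1} is a computational verification for the exceptional types together with a survey of the known classical cases, and your proposal should be read the same way.
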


\begin{exmp} \label{conn1} Using the {\sf PyCox} commands {\tt klcellreps} 
and {\tt cellrepstarorbit} (see Remark~\ref{sprepelm}), we have a way of 
running through all the left cells of $W$. Furthermore, it is 
straightforward to write a function which verifies if a given left cell
is left-connected or not. In this way, we have verified that 
Conjecture~\ref{conn0} holds for all $W$ of exceptional type $H_3$, $H_4$, 
$F_4$, $E_6$, $E_7$, $E_8$. (For type $E_8$, this takes about $2$ or $3$ 
days; note that it is not necessary to keep all the left cells at once in 
the main memory of the computer.) For type $A_n$, the conjecture holds by 
\cite[\S 5]{KL}; for $I_2(m)$, it follows easily from the description of 
the left cells in \cite[7.15]{Lusztig03}. For type $B_n$, it follows
from Garfinkle \cite[Theorem~3.5.9]{gar3} (via the known dictionary 
between left cells and the corresponding notions in the theory of 
primitive ideals in enveloping algebras; see \cite[5.25]{LuBook} and the 
references there). The question seems to be open for $D_n$. 
\end{exmp}

Finally, we come to Kottwitz' conjecture \cite{kottwitz}. Let
$W$ be a finite Coxeter group and $C$ be a conjugacy class of 
involutions in $W$. Following \cite[\S 1]{kottwitz}, \cite[6.3]{luvo}, 
let $V_{C}$ be an $\R$-vector space with a basis $\{a_w \mid w \in C\}$. 
Then there is a linear action of $W$ on $V_{C}$ such that, for any 
$s \in S$ and $w \in C$, we have
\[ s.a_w=\left\{\begin{array}{cl} -a_w & \qquad \mbox{if $sw=ws$ and
$\ell(sw)<\ell(w)$},\\ a_{sws} & \qquad \mbox{otherwise}.\end{array}\right.\]

\begin{conj}[Kottwitz \protect{\cite[\S 1]{kottwitz}}] \label{coko}
Let $C$ be a conjugacy class of involutions and $\Gamma$ be a left 
cell of $W$. Then $\dim \Hom_W(V_C,[\Gamma]_1)=|C \cap \Gamma|$.
\end{conj}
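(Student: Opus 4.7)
The plan is to reduce to type $E_8$ via prior work and then dispatch the remaining case computationally with the machinery of Section~\ref{sece8}. As recalled in the introduction, work of Kottwitz \cite{kottwitz}, Casselman \cite{cass} and Bonnaf\'e and the first-named author \cite{boge}, \cite{pycox}, \cite{tkott} already establishes Conjecture~\ref{coko} for every finite Coxeter group except possibly type $E_8$, so only this remaining case is left.

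For $W$ of type $E_8$ I would attack the two sides of the claimed equality separately. The left-hand side equals $\langle \chi_{V_C},\chi_{[\Gamma]_1}\rangle_W$, and both characters are accessible. The character $\chi_{V_C}$ depends only on the class $C$ and can be computed, for each of the (finitely many) involution classes of $W$, by running through the basis $\{a_w\mid w\in C\}$ and applying the explicit formula for the $W$-action given just before Conjecture~\ref{coko}; one then decomposes $\chi_{V_C}$ into irreducibles against the known character table of $W$. The character $\chi_{[\Gamma]_1}$ is star-invariant in $\Gamma$, because a star operation induces an isomorphism $[\Gamma]_A\cong[\Gamma^*]_A$ of $\cH$-modules by Proposition~\ref{klstar}, and specialisation at $v=1$ preserves traces. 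Combined with Remark~\ref{defrel1}, this means that the decomposition of $[\Gamma]_1$ into irreducibles for an arbitrary left cell $\Gamma$ of $W$ can be read off from the corresponding data for the $106$ cells of $\fCla(W)$ that are already stored via {\tt klcellreps} (Remark~\ref{sprepelm}). For the right-hand side, I would walk through the set $\cI$ of involutions already produced in Example~\ref{twoE8}(b) (with $|\cI|=199952$) and apply {\tt leftcellelm} of Remark~\ref{sprepelm} to each $w\in\cI$ in order to record which left cell---equivalently, which star orbit of a cell in $\fCla(W)$---contains $w$. Partitioning $\cI$ simultaneously by conjugacy class and by left cell then yields the complete table of numbers $|C\cap\Gamma|$, which can be compared with the corresponding values of $\dim\Hom_W(V_C,[\Gamma]_1)$ computed above.

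The principal obstacle is not conceptual but one of implementation: neither the $101796$ left cells nor the $199952$ involutions fit comfortably in main memory at once. The traversal must therefore be organised so that one processes a single star orbit of left cells at a time, scans only the involutions relevant to that orbit, and updates a running table indexed by (involution class)$\times\,\fCla(W)$ of intersection counts and inner products. This is essentially the same strategy already exploited for the left-connectedness verification of Example~\ref{conn1}, so the algorithmic framework is in place; what remains is to run it and check that the two tables agree entry by entry.
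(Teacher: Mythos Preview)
Your proposal is correct in outline and would succeed, but the paper organises the $E_8$ verification somewhat differently, exploiting the leading-coefficient machinery more directly. For the left-hand side the paper uses that each left cell contains a unique $d\in\breve{\cD}$ (Example~\ref{tauE8}) and that, by Proposition~\ref{pyc56}(a) together with $\breve{n}_d=1$ (Example~\ref{twoE8}(a)), one has simply $m(\Gamma_d,E)=c_{d,E}$; the multiplicities are thus read off from the already-computed table of leading coefficients rather than via star-invariance and {\tt klcellreps}. For the right-hand side the paper avoids invoking {\tt leftcellelm} on each involution: instead it applies Corollary~\ref{twoE8a} to rewrite $C\cap\Gamma_d$ as the set of $w\in C$ with $c_{w,E_0}\neq 0$ lying in the same $\tau$-cell as $d$, which reduces everything to {\tt gentaucells} applied to $C$ together with the leading coefficients on $\cI$ already tabulated in Example~\ref{twoE8}. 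Your route through {\tt leftcellelm} is heavier but valid; the paper's route has the conceptual advantage of using the new characterisation of left cells (Theorem~\ref{thm2}) that the preceding sections were designed to establish.

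One point in your description needs care: a ``running table indexed by (involution class)$\times\,\fCla(W)$'' is too coarse if it aggregates $|C\cap\Gamma|$ over an entire star orbit. The left-hand side is constant on star orbits by Proposition~\ref{klstar}, but you have not argued that $|C\cap\Gamma|$ is, and indeed that constancy is a consequence of the conjecture rather than an input to its proof. You must verify the equality cell by cell across all $101796$ left cells, as your earlier sentence about the ``complete table of numbers $|C\cap\Gamma|$'' correctly suggests; the index set $\fCla(W)$ can only serve to organise the traversal, not to collapse the check.
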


By work of Kottwitz himself, Casselman \cite{cass}, Bonnaf\'e and the 
first-named author \cite{boge}, \cite{pycox}, \cite{tkott}, this conjecture 
is already known to hold except possibly for $W$ of type $E_8$. The 
verification for type $E_8$ is now a matter of combining various pieces of 
known information. The decompositions of the representations $V_C$ into 
irreducibles can be computed using the known character table of $W$; see 
\cite{cass}. In {\sf PyCox}, this is done using the command 
{\tt involutionmodel}. Now let $\Gamma$ be a left cell. By 
Example~\ref{tauE8}, there is a unique element $d\in\Gamma\cap \breve{\cD}$;
we then write $\Gamma=\Gamma_d$. By Proposition~\ref{pyc56} and 
Example~\ref{twoE8}(a), we have $m(\Gamma_d,E) =c_{d,E}$ for all 
$E\in\Irr(W)$. Hence, we have 
\[\dim\Hom_W(V_C,[\Gamma_d]_1)=\sum_{E\in\Irr(W)} c_{d,E}
\dim\Hom_W(V_C,E),\]
and these dimensions can be explicitly determined using the results
in Example~\ref{twoE8} and the output of {\tt involutionmodel} (or the 
tables in \cite{cass}). Finally, by Example~\ref{twoE8}(c), there is a 
unique $E_0\in\cS(W)$ such that $c_{d,E_0}\neq 0$. Then, by 
Corollary~\ref{twoE8a}, we have
\begin{align*}
C\cap \Gamma_d&=\{w\in C\mid \ba(w)=\ba(d) \mbox{ and } d,w 
\mbox{ belong to the same $\tau$-cell}\}\\ &=\{w\in C\mid c_{w,E_0}\neq 0 
\mbox{ and } d,w \mbox{ belong to the same $\tau$-cell}\}.
\end{align*}
These intersections can be determined explicitly using the
{\sf PyCox} command {\tt gentaucells} (applied to $C$) and the results 
in Example~\ref{twoE8} (or the command {\tt klcellrepelm} in 
Remark~\ref{sprepelm}). In this way, we have verified that 
Conjecture~\ref{coko} holds for $W$ of type $E_8$.

\medskip
\noindent {\bf Acknowledgements.} We thank G. Lusztig for pointing out 
the suggestion in \cite[2.17]{LuRat}. We are also indebted to T. Pietraho 
who helped to clarify some points about Garfinkle's work \cite{gar3}. 
 

\end{document}